\setlist{font=\normalfont} %%%upright item number 
\theoremstyle{plain}
\newtheorem{theorem}{Theorem}[section]
\newtheorem{lemma}[theorem]{Lemma}
\newtheorem{proposition}[theorem]{Proposition}
\newtheorem{corollary}[theorem]{Corollary}
\theoremstyle{definition}
\newtheorem{definition}[theorem]{Definition}
\newtheorem{example}[theorem]{Example}
\theoremstyle{remark}
\numberwithin{equation}{section}
\def\th@plain{%
  \thm@notefont{}% same as heading font
  \itshape % body font
}
\def\th@definition{%
  \thm@notefont{}% same as heading font
  \normalfont % body font
} \makeatother
\setlist{font=\normalfont}
\newcommand{\F}{\mathbb{F}}
\newcommand{\set}[1]{\{#1\}}
\newcommand{\cset}[2]{\set{{#1}\colon{#2}}}
\newcommand{\Fix}[1]{\mathrm{Fix}\,{#1}}
\newcommand{\LG}[2]{L^{#1}({#2})}
\newcommand{\id}[1]{\mathrm{id}\,_{#1}}
\newcommand{\sym}[1]{\mathrm{Sym}\,{(#1)}}
\newcommand{\C}{\mathbb{C}}
\newcommand{\gen}[1]{\langle#1\rangle}
\newcommand{\Z}{\mathbb{Z}}
\newcommand{\gyrL}[1]{L^\mathrm{gyr}(#1)}
\newcommand{\gyr}[2]{{\mathrm{gyr}[{#1}]}{#2}}
\newcommand{\orb}[1]{\mathrm{orb}\,{#1}}
\newcommand{\abs}[1]{|#1|}
\newcommand{\spn}[1]{\mathrm{span}\,{#1}}
\newcommand{\qt}[1]{``#1''}
\newcommand{\lsum}[2]{\displaystyle\sum_{#1}^{#2}}
\newcommand{\aorb}[2]{\mathrm{orb}_{#1}\,{#2}}
\newcommand{\stab}[1]{\mathrm{stab}\,{#1}}
\newcommand{\Cset}[2]{\left\{{#1}\colon{#2}\right\}}
\newcommand{\Gl}[1]{\mathrm{GL}({#1})}
\newcommand{\aut}[1]{\mathrm{Aut}\,{(#1)}}
\newcommand{\lbar}{\overline}
\newcommand{\res}[2]{{#1}\big|_{{#2}}}
\newcommand{\Bp}[1]{\left(#1\right)}
\newcommand{\operp}{\,{\footnotesize{\textcircled{$\perp$}}}\,}
\newcommand{\norm}[1]{\|#1\|}
\newcommand{\lhat}[1]{\widehat{#1}}
\newcommand{\Abs}[1]{\left|#1\right|}
\newcommand{\Res}[3]{\mathrm{Res}^{#1}_{#2}{#3}}
\newcommand{\Ind}[3]{\mathrm{Ind}^{#1}_{#2}{#3}}
\DeclareMathAlphabet{\cols}{OMS}{cmsy}{m}{n} %
\begin{document}
% ---------------------TITLE----------------------------------
\title{\textbf{Frobenius reciprocity on the space of functions invariant under a group action}}
\author{Teerapong Suksumran\footnote{Corresponding author.}\,\,\,\href{https://orcid.org/0000-0002-1239-5586}{\includegraphics[scale=1]{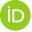}} \quad and\quad Tanakorn Udomworarat\\[5pt]
Research Center in Mathematics and Applied Mathematics\\
Department of Mathematics\\
Faculty of Science, Chiang Mai University\\
Chiang Mai 50200, Thailand\\[5pt]
\texttt{teerapong.suksumran@cmu.ac.th} (T. Suksumran)\\
\texttt{tanakorn\_u@cmu.ac.th} (T. Udomworarat)}
\date{}
\maketitle

%Abstract-----------------------------------------------------
\begin{abstract}
This article studies connections between group actions and their corresponding vector spaces. Given an action of a group $G$ on a nonempty set $X$, we examine the space $L(X)$ of scalar-valued functions on $X$ and its fixed subspace:
$$
L^G(X) = \{f\in L(X)\colon f(a\cdot x) = f(x) \textrm{ for all }a\in G, x\in X\}.
$$
In particular, we show that $L^G(X)$ is an invariant of the action of $G$ on $X$. In the case when the action is finite, we compute the dimension of $L^G(X)$ in terms of fixed points of $X$ and prove several prominent results for $L^G(X)$, including Bessel's inequality and Frobenius reciprocity.
\end{abstract}
\textbf{Keywords:} Bessel's inequality, free action, Frobenius reciprocity, function space, group action.
\\[3pt]
\textbf{2010 MSC:} Primary 20C15; Secondary 05E18, 05E15, 05E10, 46C99.

\section{Introduction}
Let $G$ be a finite group and let $H$ be a subgroup of $G$. Denote by $C(G)$ and $C(H)$ the spaces of complex-valued class functions on $G$ and on $H$, respectively. {\it Frobenius reciprocity} for class functions on $G$ states that $\Res{G}{H}{}$ and $\Ind{G}{H}{}$ are Hermitian adjoint with respect to the Hermitian inner product defined by 
\begin{equation}
\gen{f, g} = \dfrac{1}{\abs{G}}\lsum{x\in G}{}f(x)\lbar{g(x)}\quad\textrm{and}\quad \gen{h, k}_H = \dfrac{1}{\abs{H}}\lsum{x\in H}{}h(x)\lbar{k(x)}
\end{equation}
for all $f, g\in C(G), h, k\in C(H)$. In other words, if $f$ is a class function on $H$ and if $g$ is a class function on $G$, then
\begin{equation}
\gen{\Ind{G}{H}{f}, g}_H = \gen{f, \Res{G}{H}{g}},
\end{equation}
where $\Res{G}{H}{}$ is a linear transformation from $C(G)$ to $C(H)$ and $\Ind{G}{H}{}$ is a linear transformation from $C(H)$ to $C(G)$. This result is crucial and plays fundamental roles in \mbox{proving} well-known results in the representation theory of finite groups such as Mackey's irreducibility criterion; see, for instance, \cite[Theorem 8.3.6]{BS2012RTF}.

It is well known that the conjugation relation in any group $G$ may be viewed as a group action of $G$ on itself by the formula $g\cdot x = gxg^{-1}$ for all $g, x\in G$. This suggests studying Frobenius reciprocity in the setting of group actions. In the present article, we generalize Frobenius reciprocity to the family of functions that are invariant under a given group action. We remark that there are other versions of Frobenius reciprocity; see, for instance, \cite{MR3977723, MR3592484}.

Let $\F$ be a field and let $X$ be a nonempty set with an action of a group $G$ (that is, $X$ is a $G$-set). Define
\begin{equation}
    L(X)=\cset{f}{f \textrm{ is a function from $X$ to $\F$}}.
\end{equation}
Recall that $L(X)$ is a vector space under the following vector addition and scalar multiplication:
\begin{align*}
    (f+g)(x) &=f(x)+g(x)\\
    (\alpha f)(x) &=\alpha f(x)
\end{align*}
for all $f,g\in L(X), \alpha\in\F, x\in X$. Furthermore, $G$ acts linearly on $L(X)$ by the formula
\begin{equation}\label{equ: linear action on G-set}
    (a\star f)(x) = f(a^{-1}\cdot x),\qquad x\in X,
\end{equation}
for all $a\in G, f\in L(X)$, where $\star$ is the induced $G$-action on $L(X)$ and $\cdot$ is the given $G$-action on $X$. Therefore, we can speak of the {\it fixed subspace} of $L(X)$:
$$
    \Fix{(L(X))} = \cset{f\in L(X)}{a\cdot f = f \textrm{ for all }a\in G}.
$$
It is not difficult to check that $a\cdot f = f$ for all $a\in G$ if and only if $f(a\cdot x)=f(x)$ for all $a\in G, x\in X$. The fixed subspace of $L(X)$ is so important that we give a separate definition.

\begin{definition}\label{def: L^G(X)}
Let $X$ be a $G$-set. The fixed subspace of $L(X)$ associated with the action given by \eqref{equ: linear action on G-set} is defined as
\begin{equation}
\LG{G}{X}=\cset{f\in L(X)}{f(a\cdot x)=f(x) \textrm{ for all }a\in G, x\in X}.
\end{equation}
\end{definition}

In the case when $X$ is a finite-dimensional vector space, \eqref{equ: linear action on G-set} induces an action of $G$ on the space $\F[X]$ of polynomial functions on $X$. The study of this action along with the corresponding fixed subspace is a fundamental topic in invariant theory \cite{MR2004511, MR1304906, MR1869812, MR1716962}. The following examples indicate that several familiar families  of functions in the literature may be viewed as $\LG{G}{X}$ with appropriate group actions.

\begin{example}
Let $X$ be a nonempty set. If $G=\set{\id{X}}$ is the trivial subgroup of $\sym{X}$, then $G$ acts on $X$ by evaluation: $\sigma\cdot x = \sigma(x)$ for all $\sigma\in G, x\in X$ and in this case, $\LG{G}{X}=L(X)$. If $G=\sym{X}$, then $\LG{G}{X}$ becomes the space of constant functions: $$\LG{G}{X}=\cset{f_{\alpha}}{\alpha\in\F},$$ where $f_{\alpha}(x)=\alpha$ for all $x\in X$. 
\end{example}

\begin{example}[Class functions]
Let $G$ be a group and let $\F = \C$. Recall that $G$ acts on itself by conjugation: $a\cdot x=axa^{-1}$ for all $a, x\in G$.
In this case, $$\LG{G}{G}=\cset{f\colon G\to \C}{f(axa^{-1})=f(x) \textrm{ for all }a, x\in G},$$ which is the family of complex-valued class functions defined on $G$.
\end{example}

\begin{example}[Periodic functions]
Let $\F$ be a field. Suppose that $A$ is an abelian group. Fix $t\in A$ and set $G=\gen{t}=\set{nt:n\in\Z}$. Then $G$ acts on $A$ by addition and
$$\LG{G}{A}=\cset{f:A\to\F}{f(x+t)=f(x) \textrm{ for all }x\in A},$$
which is the family of periodic functions defined on $A$ with period $t$.
\end{example}

\begin{example}[Modular functions]
Let $\C^{\infty}=\C\cup\set{\infty}$ be the extended complex plane. Recall that a modular function $f\colon \C^{\infty}\to\C^{\infty}$ must satisfy the condition that $$f\left(\dfrac{az+b}{cz+d}\right)=f(z),\qquad z\in\C^{\infty},$$
where $a,b,c,d\in \Z$ and $ad-bc=1$ \cite[p. 34]{MR1027834}. Hence, modular functions are elements in $L^G(\C^{\infty})$, where $G$ is the modular group consisting of matrices of the form
$$\begin{bmatrix}a & b \\ c & d\end{bmatrix}\quad\textrm{with } a,b,c,d\in\Z \textrm{ and } ad - bc = 1$$
and $G$ acts on $\C^{\infty}$ by the formula
$$
\begin{bmatrix}a & b \\ c & d\end{bmatrix}\cdot z = \dfrac{az+b}{cz+d}.
$$
\end{example}

\begin{example}
A gyrogroup is a nonassociative group-like structure that is not, in \mbox{general}, a group \cite{AU2008AHG}.  Nevertheless, it generalizes the notion of a group and shares several properties with groups. The present article stems from the study of left regular representation of a finite gyrogroup in a series of articles \cite{TS2018MTGP, TS2020CRG, TSLRG2020}. More precisely, let $(K, \oplus)$ be a gyrogroup. As in Section 4 of \cite{TS2018MTGP}, the space 
\begin{equation}
    \gyrL{K} = \cset{f\in L(K)}{f(a\oplus\gyr{x, y}{z}) = f (a\oplus z)\textrm{ for all }a, x, y, z \in K}
\end{equation}
arises as a representation space of $K$ associated with a gyrogroup version of left regular representation. Using the change of variable, $w = a\oplus z$, we obtain that
$$
\gyrL{K} = \cset{f\in L(K)}{f(a\oplus\gyr{x, y}{(\ominus a\oplus z)}) = f (z) \textrm{ for all }a, x, y, z \in K}.
$$
Let $G$ be the subgroup of $\sym{K}$ generated by the set $\cset{L_a\circ\gyr{x, y}{}\circ L_{a}^{-1}}{a, x, y\in K}$, where $L_a$ is the left gyrotranslation by $a$ defined by $L_a(z) = a \oplus z$ for all $z\in K$ and $\gyr{x, y}{}$ is the gyroautomorphism generated by $x$ and $y$. It is clear that $G$ acts on $K$ by evaluation. Furthermore, $\gyrL{K} = \LG{G}{K}$.
\end{example}

In Section \ref{sec: basic property}, we study basic properties of arbitrary group actions related to their corresponding fixed subspaces. In Section \ref{sec: finite action}, we reduce to the case of a finite action (that is, an action of a finite group on a finite set) and compute the dimension of the fixed subspace. This leads to some remarkable properties of fixed-point-free actions. Once the usual Hermitian inner product on $L(X)$ is introduced, where $X$ is a finite $G$-set, an orthogonal decomposition of $L(X)$ is obtained and several interesting results such as Fourier expansion, Bessel's inequality, and Frobenius reciprocity are established for the case of functions invariant under the action of $G$ on $X$.

\section{Basic properties of group actions and their corresponding spaces}\label{sec: basic property}

Let $G$ be a group and let $X$ be a $G$-set. Recall that the action of $G$ on $X$ induces an equivalence relation $\sim$ given by
\begin{equation}\label{def: sim relation}
    x\sim y \quad\textrm{if and only if}\quad y=a\cdot x \textrm{ for some } a\in G
\end{equation}
for all $x,y\in X$. Also, the orbit of $x\in X$ under the action of $G$ is defined as
$$\orb{x}=\set{y\in X:y\sim x}=\set{a\cdot x:a\in G}.$$
Hence, the collection $\cset{\orb{x}}{x\in X}$ forms a partition of $X$. This partition leads to a characterization of elements in $\LG{G}{X}$, as shown in the next theorem, and eventually to a standard basis for $\LG{G}{X}$ if there are only finitely many orbits of $G$ on $X$.

\begin{theorem}\label{thm: f in L^G(x) iff f is constant}
Suppose that $X$ is a $G$-set and let $\cols{P}$ be the partition of $X$ determined by the equivalence relation \eqref{def: sim relation}. If $f\in L(X)$, then $f\in \LG{G}{X}$ if and only if $f$ is constant on $C$ for all $C\in \cols{P}$.
\end{theorem}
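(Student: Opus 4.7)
The plan is to prove both directions by unpacking definitions, using the fact that the cells of $\cols{P}$ are precisely the orbits of $G$ on $X$. So any $C\in\cols{P}$ has the form $C=\orb{x}$ for some $x\in X$, and two points $x,y\in X$ lie in the same cell exactly when $y=a\cdot x$ for some $a\in G$.

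For the forward direction, I would assume $f\in\LG{G}{X}$ and fix a cell $C\in\cols{P}$. Picking any $x\in C$, every other element $y\in C$ can be written as $y=a\cdot x$ for some $a\in G$ by the definition of $\sim$. Applying the defining property of $\LG{G}{X}$ then gives $f(y)=f(a\cdot x)=f(x)$, so $f$ takes the single value $f(x)$ throughout $C$.

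For the converse, I would assume $f$ is constant on each $C\in\cols{P}$ and check the invariance condition. Given $a\in G$ and $x\in X$, the points $x$ and $a\cdot x$ both lie in the orbit $\orb{x}$, which is one of the cells of $\cols{P}$. Since $f$ is constant on this cell, $f(a\cdot x)=f(x)$, placing $f$ in $\LG{G}{X}$.

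Since the argument is just a direct translation between the orbit partition and the invariance condition, there is no real obstacle; the only point worth writing carefully is the identification of cells of $\cols{P}$ with orbits, which is already implicit in the definition of $\sim$ given in \eqref{def: sim relation}. The proof should be short, perhaps half a page, and sets up the next natural step: indexing a basis of $\LG{G}{X}$ by the indicator functions of the orbits when there are only finitely many of them.
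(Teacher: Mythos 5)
Your proof is correct and follows essentially the same argument as the paper's: in the forward direction you write any two points of a cell as $y = a\cdot x$ and apply the invariance of $f$, and in the converse you note that $x$ and $a\cdot x$ lie in the same orbit, where $f$ is constant. Nothing is missing; the identification of the cells of $\cols{P}$ with the orbits, which you rightly flag as the only point needing care, is exactly how the paper proceeds.
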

\begin{proof}
Suppose that $f\in L^G(X)$ and let $C\in \cols{P}$. Let $x,y\in C$. Then $y=a\cdot x$ for some $a\in G$. Thus, $f(y)=f(a\cdot x)=f(x)$. This proves that $f$ is constant on $C$.

Suppose conversely that $f$ is constant on $C$ for all $C\in \cols{P}$. Let $x\in X$ and let $a\in G$. Since $x\sim a\cdot x$, it follows that $x$ and $a\cdot x$ belong to the same orbit $C$ in $\cols{P}$. By assumption, $f$ is constant on $C$ and so $f(x)=f(a\cdot x)$. Since $x$ and $a$ are arbitrary, we obtain that $f\in L^G(X)$.
\end{proof}

In view of Theorem \ref{thm: f in L^G(x) iff f is constant}, a natural question arises: Can a space of functions on a set endowed with a partition be viewed as $\LG{G}{X}$ for a suitable group action? The answer to this question is affirmative. In fact, by Corollary 1.17  of \cite{MR1716962}, if $X$ is a nonempty set and if $\cols{P} = \cset{X_i}{i\in I}$ is a partition of $X$, then the following permutation group 
\begin{equation}
S_{\cols{P}} = \cset{\sigma\in\sym{X}}{\sigma(X_i) = X_i \textrm{ for all }i\in I}    
\end{equation}
acts on $X$ by evaluation and induces its orbits on $X$ as the cells of the partition. The following theorem shows that the group $S_{\cols{P}}$ may be replaced by its subgroup $G_{\cols{P}}$ and, among other things, describes a concrete method to construct the group $G_{\cols{P}}$. We remark that if $X$ is finite, then $G_{\cols{P}}$ and $S_\cols{P}$ coincide.

\begin{theorem}\label{thm: group induced by partition of set}
Let $X$ be a nonempty set and let $\cols{P} = \cset{[x]}{x\in X}$ be a partition of $X$. Then there exists a subgroup $G_{\cols{P}}$ of $S_{\cols{P}}$ that acts on $X$ such that $\orb{x} = [x]$ for all $x\in X$.
\end{theorem}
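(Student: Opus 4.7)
The plan is to build $G_{\cols{P}}$ by taking the subgroup of $\sym{X}$ generated by all transpositions within cells. Concretely, define
$$
G_{\cols{P}} = \gen{\cset{\tau_{a,b}}{a, b\in X \text{ with } [a] = [b]}},
$$
where $\tau_{a,b}$ denotes the transposition of $a$ and $b$ (and $\tau_{a,a} = \id{X}$). Each generator $\tau_{a,b}$ moves only elements of the single cell $[a] = [b]$, so it fixes every cell of $\cols{P}$ setwise. Since $S_{\cols{P}}$ is a subgroup of $\sym{X}$ closed under composition and inverses, and every generator lies in $S_{\cols{P}}$, we conclude $G_{\cols{P}} \leq S_{\cols{P}}$. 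The group $G_{\cols{P}}$ then acts on $X$ by evaluation in the standard way.

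Next I would verify the orbit equality $\orb{x} = [x]$ for each $x\in X$ by showing both containments. For $\orb{x}\subseteq [x]$: every element of $G_{\cols{P}}$ is a finite composition of transpositions of the form $\tau_{a,b}$ with $[a]=[b]$, and each such transposition preserves every cell setwise; hence any $\sigma\in G_{\cols{P}}$ maps the cell containing $x$ to itself, so $\sigma(x)\in [x]$. For $[x]\subseteq \orb{x}$: given $y\in [x]$, if $y=x$ then $y=\id{X}(x)\in \orb{x}$; otherwise $y\neq x$ with $[x]=[y]$, so the transposition $\tau_{x,y}$ is one of the generators of $G_{\cols{P}}$ and $\tau_{x,y}(x) = y$, placing $y$ in $\orb{x}$.

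There is no serious obstacle here. The only minor subtlety is to make sure the construction works uniformly regardless of the cardinalities involved (singleton cells, infinite cells, and infinitely many cells are all handled uniformly, since we impose no finiteness assumption on the generating set and subgroups of $\sym{X}$ generated by arbitrary subsets exist). The stronger statement that $G_{\cols{P}} = S_{\cols{P}}$ when $X$ is finite, mentioned in the remark preceding the theorem, would follow from the classical fact that the full symmetric group on a finite set is generated by transpositions, applied cell by cell, but this is not needed for the theorem itself.
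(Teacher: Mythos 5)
Your proof is correct, and your construction is the same as the paper's: the generating set of within-cell transpositions you use coincides with the paper's generators $\tau(x,y)$ (the paper merely extends the notation to all pairs by declaring $\tau(x,y)=\id{X}$ when $x=y$ or $x\nsim_{\cols{P}}y$), and your verification of $[x]\subseteq\orb{x}$ via the generator $\tau_{x,y}$ is identical. Where you genuinely diverge is in the harder containment $\orb{x}\subseteq[x]$. The paper proves this directly and combinatorially: it writes an arbitrary $\tau\in G_{\cols{P}}$ as a product of transpositions $(y_1~y_2)(y_3~y_4)\cdots(y_{2m-1}~y_{2m})$ and traces, via a chain of ``maximal index'' choices $j_1>j_2>\cdots$, how $x$ is carried to $y$ through successive factors, each step staying within a $\sim_{\cols{P}}$-class; only afterwards does it separately verify $G_{\cols{P}}\leq S_{\cols{P}}$. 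You instead establish $G_{\cols{P}}\leq S_{\cols{P}}$ \emph{first} (each generator stabilizes every cell setwise, and $S_{\cols{P}}$ is a subgroup, so the generated subgroup lies inside it) and then read off $\orb{x}\subseteq[x]$ as an immediate corollary: any $\sigma\in S_{\cols{P}}$ satisfies $\sigma([x])=[x]$, hence $\sigma(x)\in[x]$. This reorganization is a real improvement: it replaces the paper's delicate bookkeeping over transposition factors --- the most error-prone part of the paper's argument --- with a one-line consequence of a fact both proofs need anyway, and it works uniformly for infinite cells and infinitely many cells exactly as you note. The only thing the paper's longer route exhibits that yours does not is an explicit description of how an element of $x$'s orbit is reached through a $\sim_{\cols{P}}$-chain, but nothing in the theorem requires that extra information.
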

\begin{proof}
Define an equivalence relation $\sim_{\cols{P}}$ on $X$ by $y\sim_{\cols{P}} z$ if and only if $y\in[x]$ and $z\in[x]$ for some $x\in X$. For all $x,y\in X$, define 
\begin{equation*}
\tau(x,y)=
\begin{cases}
\id{X} & \textrm{if } x = y \textrm{ or } x\nsim_{\cols{P}} y;\\
(x~~y) & \textrm{otherwise}.
\end{cases}
\end{equation*}
Then $\tau(x,y)\in \sym{X}$. Set $G_{\cols{P}}=\gen{\tau(x,y):x,y\in X}$. Then $G_{\cols{P}}$ acts on $X$ by evaluation. Next, we prove that $\orb{x}=[x]$ for all $x\in X$. Let $x\in X$. By definition, $[x]\subseteq\orb{x}$. To prove the reverse inclusion, suppose that $y\in\orb{x}$. Then $y=\tau\cdot x$ for some $\tau\in G$. We claim that $y\sim_{\cols{P}} x$. If $x=y$, then we are done. We may therefore assume that $x\neq y$. By construction, $\tau=\tau(y_1,y_2)\circ\tau(y_3,y_4)\circ\cdots\circ\tau(y_{2m-1},y_{2m})$. Moreover, we can assume that $\tau(y_{2i-1},y_{2i})\neq \id{X}$ for all $i=1,2,\ldots,m$. Therefore,
$$
\tau = (y_1~~ y_2)(y_3~~ y_4)\cdots (y_{2m-1}~~ y_{2m}).
$$
Since $y=\tau(x)$, there is a maximum value $j_1\in\set{1,2,\ldots, 2m}$ such that $y_{j_1}\neq x$ and $(y_{j_1}~~x)$ is a transposition factor in $\tau$. So $y_{j_1}\sim_{\cols{P}} x$. If $y_{j_1}=y$, then $y\sim_{\cols{P}} x$. If $y_{j_1}\neq y$, then there is a maximum value $j_2\in\set{1, 2,\ldots, 2m}$ such that $1\leq j_2 < j_1$ and $(y_{j_2}~~y_{j_1})$ is a transposition factor in $\tau$. So $y_{j_2}\sim_{\cols{P}} y_{j_1}\sim_{\cols{P}} x$. Continuing this procedure, we obtain that $y\sim_{\cols{P}} y_{j_k}\sim_{\cols{P}} y_{j_{k-1}}\sim_{\cols{P}}\cdots\sim_{\cols{P}} y_{j_2}\sim_{\cols{P}} y_{j_1}\sim_{\cols{P}} x$. Hence, $y\in [x]$. This proves $\orb{x}\subseteq [x]$ and so equality holds.

Next, we prove that $\tau(x,y)\in S_{\cols{P}}$ for all $x,y\in X$. Hence, $G_{\cols{P}}\subseteq S_{\cols{P}}$  by the minimality of $G_{\cols{P}}$. Let $x, y\in X$. If $x=y$ or $x\nsim_{\cols{P}} y$, then $\tau(x,y)=\id{X}$ and hence $\tau(x, y)(X_i) = X_i$ for all $X_i\in \cols{P}$. We may therefore assume that $x\neq y$ and $x\sim_{\cols{P}} y$. Let $X_i\in \cols{P}$ and let $z\in X_i$. Note that
\begin{equation*}
\tau(x,y)(z)=
\begin{cases}
z & \textrm{if } z\neq x \textrm{ and } z\neq y;\\
y & \textrm{if } z=x;\\
x & \textrm{if } z=y.
\end{cases}
\end{equation*}
Since $x\sim_{\cols{P}} y$, $x$ and $y$ are in the same cell of $\cols{P}$. It follows that $\tau(x,y)(z)\in X_i$. This implies that $\tau(x, y)(X_i) = X_i$, which completes the proof.
\end{proof}

Let $X$ be a $G$-set and let $\cols{P}$ be the partition of $X$ determined by the equivalence relation \eqref{def: sim relation}. For each $C\in \cols{P}$, the indicator function $\delta_C$ is defined by
\begin{equation}
\delta_C(x) = 
\begin{cases}
1 & \textrm{if } x\in C;\\
0 & \textrm{if } x\in X\setminus C.
\end{cases}
\end{equation}
By Theorem \ref{thm: f in L^G(x) iff f is constant}, $\delta_C$ belongs to $\LG{G}{X}$ for all $C\in \cols{P}$. In fact, we obtain the following theorem.

\begin{theorem}\label{thm: basis of L^G(X)}
Suppose that $X$ is a $G$-set and let $\cols{P}$ be the partition of $X$ determined by the equivalence relation \eqref{def: sim relation}. Then $\cols{B}=\cset{\delta_C}{C\in \cols{P}}$ is a linearly independent set in $\LG{G}{X}$. Furthermore, $\cols{P}$ is finite if and only if $\cols{B}$ forms a basis for $\LG{G}{X}$. In particular,  $\dim(\LG{G}{X})\geq\abs{\cols{P}}$ and equality holds if $\cols{P}$ is finite.
\end{theorem}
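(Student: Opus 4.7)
The plan is to verify membership, then prove linear independence directly by evaluation, then handle the basis equivalence in both directions. First, for each $C\in\cols{P}$ the indicator $\delta_C$ takes the constant value $1$ on $C$ and the constant value $0$ on every other orbit, so $\delta_C$ is constant on each cell of $\cols{P}$; by Theorem \ref{thm: f in L^G(x) iff f is constant} this places $\delta_C$ in $\LG{G}{X}$, so $\cols{B}\subseteq\LG{G}{X}$.

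For linear independence, I would take any finite subfamily $C_1,\ldots,C_n$ of distinct cells of $\cols{P}$ and any scalars $\alpha_1,\ldots,\alpha_n\in\F$ with $\sum_{i=1}^n\alpha_i\delta_{C_i}=0$. Picking $x_j\in C_j$ and evaluating both sides at $x_j$, we have $\delta_{C_i}(x_j)=0$ for $i\neq j$ (since the $C_i$ are pairwise disjoint) and $\delta_{C_j}(x_j)=1$, forcing $\alpha_j=0$. This works for every $j$, giving linear independence of $\cols{B}$.

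For the basis equivalence, assume first that $\cols{P}$ is finite. Given $f\in\LG{G}{X}$, Theorem \ref{thm: f in L^G(x) iff f is constant} lets me choose a scalar $c_C\in\F$ with $f\equiv c_C$ on $C$ for each $C\in \cols{P}$. Then for every $x\in X$, exactly one $C\in\cols{P}$ contains $x$, so the finite sum $\sum_{C\in\cols{P}}c_C\delta_C$ evaluates to $c_C=f(x)$; hence $f=\sum_{C\in\cols{P}}c_C\delta_C$, proving that $\cols{B}$ spans and therefore forms a basis. Conversely, if $\cols{P}$ is infinite, the constant function $\mathbf{1}\in\LG{G}{X}$ cannot be written as a finite linear combination $\sum_{i=1}^n\alpha_i\delta_{C_i}$, because such a combination vanishes on the nonempty set $X\setminus(C_1\cup\cdots\cup C_n)$, while $\mathbf{1}$ does not; so $\cols{B}$ fails to span.

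The dimension statement is then immediate: the linearly independent set $\cols{B}$ has cardinality $\abs{\cols{P}}$, hence $\dim(\LG{G}{X})\geq\abs{\cols{P}}$, with equality when $\cols{P}$ is finite because $\cols{B}$ is then a basis. I do not anticipate a genuine obstacle here; the only point that merits care is the converse direction of the basis claim, where one must remember that a basis yields \emph{finite} linear combinations and exhibit a concrete $f\in\LG{G}{X}$ (such as $\mathbf{1}$) that refuses such a representation when there are infinitely many orbits.
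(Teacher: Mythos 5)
Your proof is correct and takes essentially the same approach as the paper's: membership of each $\delta_C$ in $\LG{G}{X}$ via Theorem \ref{thm: f in L^G(x) iff f is constant}, spanning in the finite case by writing $f$ as the sum of its constant orbit values times the indicators, and the same witness (the constant function $\mathbf{1}$, which no finite linear combination of the $\delta_{C_i}$ can match on an orbit outside $C_1\cup\cdots\cup C_n$) for the infinite case. The only difference is cosmetic: you spell out the pointwise evaluation argument for linear independence, which the paper records as immediate from the definition.
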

\begin{proof}
By definition, $\cols{B}$ is linearly independent. Assume that $\cols{P}$ is finite, say $\cols{P}=\set{C_1,C_2,\ldots,C_n}$. Fix $c_i\in C_i$ for all $i=1,2,\ldots,n$. Then $$f=f(c_1)\delta_{C_1}+f(c_2)\delta_{C_2}+\cdots +f(c_n)\delta_{C_n}$$
for all $f\in\LG{G}{X}$ and so $\cols{B}$ spans $\LG{G}{X}$. To prove the converse, suppose that $\cols{P}$ is infinite. Assume to the contrary that $\cols{B}$ forms a basis for $\LG{G}{X}$. Define $f$ by $f(x)=1$ for all $x\in X$. Then $f\in\LG{G}{X}$ and so $f=a_1\delta_{C_1}+a_2\delta_{C_2}+\cdots+a_n\delta_{C_n}$ for some $C_1,C_2,\ldots,C_n\in\cols{P}$. Since $\cols{P}$ is infinite, there is an orbit $C\in\cols{P}\backslash\set{C_1,C_2,\ldots,C_n}$. Choose $c\in C$. Then $f(c)=1$, whereas
$$
(a_1\delta_{C_1}+a_2\delta_{C_2}+\cdots+a_n\delta_{C_n})(c) = a_1\delta_{C_1}(c)+a_2\delta_{C_2}(c)+\cdots+a_n\delta_{C_n}(c) = 0.
$$
Hence, $f\neq a_1\delta_{C_1}+a_2\delta_{C_2}+\cdots+a_n\delta_{C_n}$, a contradiction. This shows that $\cols{B}$ is not a basis for $\LG{G}{X}$.

Since $\cols{B}$ is linearly independent, it follows that $\dim{(L^G(X))}\geq\abs{\cols{B}} = \abs{\cols{P}}$. Moreover, if $\cols{P}$ is finite, then $\cols{B}$ is a basis for $L^{G}(X)$ and so $\dim{(L^G(X))} = \abs{\cols{P}}$.
\end{proof}

According to Theorem \ref{thm: basis of L^G(X)}, $\cset{\delta_C}{C\in \cols{P}}$ does not form a basis for $\LG{G}{X}$ in the case when $\cols{P}$ is infinite. It turns out that $\cset{\delta_C}{C\in \cols{P}}$ forms a basis for the following subspace of $\LG{G}{X}$:
\begin{equation}
    L^G_{\rm fs}(X)=\set{f\in\LG{G}{X}:f\textrm{ is nonzero on finitely many orbits in }X}
\end{equation}
so that the dimension of $L^G_{\rm fs}(X)$ equals $\abs{\cols{P}}$. Next, we mention some related properties between group actions and their corresponding spaces.

\begin{theorem}\label{thm: characterization of L(X) = LG(X)}
Let $G$ be a group and let $X$ be a $G$-set. Then the following are equi-valent:
\begin{enumerate}
    \item\label{item: equality of L(X) and LG(X)} $L(X)=\LG{G}{X}$;
    \item\label{item: orbit singleton} $\abs{\orb{x}}=1$ for all $x\in X$;
    \item\label{item: act trivially} $G$ acts trivially on $X$.
\end{enumerate}
\end{theorem}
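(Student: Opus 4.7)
The plan is to establish the cycle of implications (\ref{item: act trivially}) $\Rightarrow$ (\ref{item: equality of L(X) and LG(X)}) $\Rightarrow$ (\ref{item: orbit singleton}) $\Rightarrow$ (\ref{item: act trivially}), using Theorem \ref{thm: f in L^G(x) iff f is constant} for the nontrivial step.

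For (\ref{item: act trivially}) $\Rightarrow$ (\ref{item: equality of L(X) and LG(X)}), I would simply note that if $a\cdot x = x$ for all $a\in G$ and $x\in X$, then trivially $f(a\cdot x) = f(x)$ for every $f\in L(X)$, so $L(X)\subseteq \LG{G}{X}$; the reverse containment holds by definition.

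For (\ref{item: orbit singleton}) $\Rightarrow$ (\ref{item: act trivially}), I would observe that $a\cdot x \in \orb{x}$ for every $a\in G$ and every $x\in X$; if each orbit is a singleton, then necessarily $a\cdot x = x$, meaning $G$ acts trivially on $X$.

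The main step, (\ref{item: equality of L(X) and LG(X)}) $\Rightarrow$ (\ref{item: orbit singleton}), is where Theorem \ref{thm: f in L^G(x) iff f is constant} does the work. I would argue by contrapositive: suppose some orbit $C = \orb{x_0}$ contains two distinct points $x_0$ and $y_0$. Define $f\in L(X)$ by $f(x_0) = 1$ and $f(x) = 0$ for $x \neq x_0$ (which makes sense as $1 \neq 0$ in the field $\F$). Then $f$ is not constant on $C$, so by Theorem \ref{thm: f in L^G(x) iff f is constant}, $f\notin \LG{G}{X}$, showing $L(X)\neq \LG{G}{X}$.

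No step here is really an obstacle; the only subtlety is making sure to invoke Theorem \ref{thm: f in L^G(x) iff f is constant} (or, equivalently, to exhibit the explicit indicator-type function) to convert the set-theoretic hypothesis on orbits into a statement about the function space. Once that bridge is in place, the three implications are essentially immediate from the definitions.
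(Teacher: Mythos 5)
Your proof is correct and takes essentially the same approach as the paper: both hinge on the point indicator function $\delta_{x_0}$ together with Theorem \ref{thm: f in L^G(x) iff f is constant} to link the equality $L(X)=\LG{G}{X}$ to singleton orbits. The only differences are organizational (you close a single cycle of three implications and argue the key step by contrapositive, whereas the paper proves the two biconditionals directly), which does not change the substance of the argument.
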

\begin{proof}
To prove the equivalence \eqref{item: equality of L(X) and LG(X)} $\Leftrightarrow$ \eqref{item: orbit singleton}, suppose that $L(X)=\LG{G}{X}$. Let $x\in X$. Define $\delta_x$ by 
$$
\delta_x(z)= 
\begin{cases}
1 & \textrm{if }z = x;\\
0 & \textrm{otherwise}
\end{cases}
$$
for all $z\in X$. By assumption, $\delta_x\in\LG{G}{X}$. For each $y\in\orb{x}$, $\delta_x(y)=\delta_x(x)=1$ by Theorem \ref{thm: f in L^G(x) iff f is constant}. This implies $y=x$. Thus,  $\orb{x}=\set{x}$ and so $\abs{\orb{x}}=1$. Conversely, suppose that $\abs{\orb{x}}=1$ for all $x\in X$. Let $f\in L(X)$. Then $f(y)=f(z)$ for all $y,z\in\orb{x}$. By Theorem \ref{thm: f in L^G(x) iff f is constant}, $f\in\LG{G}{X}$. This proves $L(X)\subseteq\LG{G}{X}$ and so equality holds.

To prove the equivalence \eqref{item: orbit singleton} $\Leftrightarrow$ \eqref{item: act trivially}, suppose that $\abs{\orb{x}}=1$ for all $x\in X$. Since $\set{x} = \orb{x} = \cset{a\cdot x}{a\in G}$, we obtain that $a\cdot x=x$ for all $a\in G$. Hence, $G$ acts trivially on $X$. Conversely, suppose that $G$ acts trivially on $X$. Let $x\in X$. Then $a\cdot x=x$ for all $a\in G$. This implies $\orb{x}=\set{x}$ and so $\abs{\orb{x}}=1$.
\end{proof}

\begin{theorem}\label{thm: characterization of transitive action}
Let $\F$ be a field and let $X$ be a $G$-set. Then the following are equivalent:
\begin{enumerate}
    \item\label{action of G on X is transitive} The action of $G$ on $X$ is transitive;
    \item\label{dim(L^G(X)=1)} $\dim{(\LG{G}{X})}=1$;
    \item\label{L^G(X)=gen{f_1}} $\LG{G}{X}=\set{f_{\alpha}:\alpha\in \F}=\spn{f_1}$, where $f_{\alpha}(x)=\alpha$ for all $x\in X$, $\alpha\in\F$.
\end{enumerate}
\end{theorem}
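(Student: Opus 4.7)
The plan is to prove the three-way equivalence by a short cycle (1)$\Rightarrow$(3)$\Rightarrow$(2)$\Rightarrow$(1), leveraging the partition-and-basis machinery already developed.

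First I would prove (1)$\Rightarrow$(3). Transitivity means that the equivalence relation \eqref{def: sim relation} has a single equivalence class, so the partition $\cols{P}$ equals $\{X\}$. By Theorem \ref{thm: f in L^G(x) iff f is constant}, a function $f\in L(X)$ lies in $\LG{G}{X}$ iff it is constant on each cell of $\cols{P}$, which here just means constant on $X$. This is precisely saying $f=f_\alpha$ for some $\alpha\in\F$, and $\{f_\alpha:\alpha\in\F\}=\spn{f_1}$ since $f_\alpha=\alpha f_1$.

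Next, (3)$\Rightarrow$(2) is immediate from the description $\LG{G}{X}=\spn{f_1}$: the function $f_1$ is nonzero because $X$ is nonempty, so it is a basis of the one-dimensional space $\LG{G}{X}$.

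Finally, for (2)$\Rightarrow$(1), I invoke the lower bound $\dim(\LG{G}{X})\geq \abs{\cols{P}}$ established in Theorem \ref{thm: basis of L^G(X)}, valid whether or not $\cols{P}$ is finite. If $\dim(\LG{G}{X})=1$, then $\abs{\cols{P}}\leq 1$; since $X\neq\varnothing$ forces at least one orbit, $\abs{\cols{P}}=1$, i.e., $X$ is a single $G$-orbit, so the action is transitive.

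I do not anticipate any serious obstacle here: all the heavy lifting is done by Theorem \ref{thm: f in L^G(x) iff f is constant} and Theorem \ref{thm: basis of L^G(X)}. The only subtlety worth watching is the non-emptiness of $X$, which is needed both to guarantee $f_1\neq 0$ in (3)$\Rightarrow$(2) and to exclude $\abs{\cols{P}}=0$ in (2)$\Rightarrow$(1); this is already part of the standing hypothesis that $X$ is a $G$-set.
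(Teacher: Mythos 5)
Your proposal is correct and relies on exactly the same machinery as the paper's proof---Theorem \ref{thm: f in L^G(x) iff f is constant} for constancy on orbits and the bound $\dim(\LG{G}{X})\geq\abs{\cols{P}}$ from Theorem \ref{thm: basis of L^G(X)}---differing only in running the cycle as (1)$\Rightarrow$(3)$\Rightarrow$(2)$\Rightarrow$(1) rather than the paper's (1)$\Rightarrow$(2)$\Rightarrow$(3)$\Rightarrow$(1). Your explicit attention to the nonemptiness of $X$ (ruling out $\abs{\cols{P}}=0$ and guaranteeing $f_1\neq 0$) is a small point of rigor the paper leaves implicit, but it does not change the substance of the argument.
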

\begin{proof}
Let $x,y\in X$. By assumption, there is an element $a\in G$ such that $y=a\cdot x$; that is, $x\sim y$. Thus, $x$ and $y$ are in the same orbit. This shows that $X$ has only one orbit. By Theorem \ref{thm: basis of L^G(X)}, $\dim{(\LG{G}{X})}=1$. This proves that \eqref{action of G on X is transitive} implies \eqref{dim(L^G(X)=1)}.

It is clear that $f_{\alpha}\in\LG{G}{X}$ for all $\alpha\in\F$. Thus, $\set{f_{\alpha}:\alpha\in\F}\subseteq\LG{G}{X}$. Let $f\in\LG{G}{X}$ and let $x,y\in X$. Since $\dim{(\LG{G}{X})}=1$, $X$ has only one orbit and so $y\sim x$. Hence, there is an element $a\in G$ such that $y=a\cdot x$. It follows that $f(y)=f(a\cdot x)=f(x)$ and so $f$ is constant. Therefore, $\LG{G}{X}\subseteq\set{f_{\alpha}:\alpha\in\F}$. This proves that \eqref{dim(L^G(X)=1)} implies  \eqref{L^G(X)=gen{f_1}}.

Let $x,y\in X$. Since $\LG{G}{X}=\spn{f_1}$, $\dim{\LG{G}{X}}=1$. By Theorem \ref{thm: basis of L^G(X)}, $X$ has only one orbit. Hence, $x\sim y$ and so there is an element $a\in G$ such that $y=a\cdot x$. Therefore, the action of $G$ on $X$ is transitive. This proves that  \eqref{L^G(X)=gen{f_1}} implies \eqref{action of G on X is transitive}.
\end{proof}

We close this section with the following result, which indicates that $\LG{G}{X}$ is an {\it invariant} of the action of $G$ on $X$. Therefore, in certain circumstances, one can use the notion of $\LG{G}{X}$ to distinguish inequivalent group actions.

\begin{proposition}\label{prop: equivalent G-set imply isomorphic vector space}
Let $X$ and $Y$ be $G$-sets. If $\Phi\colon X\to Y$ is an equivalence, then the map $\tau$ defined by
\begin{equation}
    \tau(f) = f\circ \Phi^{-1},\qquad f\in L(X),
\end{equation}
is a linear isomorphism from $L(X)$ to $L(Y)$ that restricts to a linear isomorphism from $\LG{H}{X}$ to $\LG{H}{Y}$ for any subgroup $H$ of $G$. Consequently, if $X$ and $Y$ are equivalent, then $L(X)\cong L(Y)$ and $\LG{H}{X}\cong \LG{H}{Y}$ as vector spaces for any subgroup $H$ of $G$.  
\end{proposition}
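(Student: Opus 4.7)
The plan is to verify three things in sequence: linearity of $\tau$, bijectivity of $\tau$, and the fact that $\tau$ and $\tau^{-1}$ send invariant functions to invariant functions. The key observation underpinning everything is that whenever $\Phi\colon X\to Y$ is an equivalence of $G$-sets (a bijection satisfying $\Phi(a\cdot x)=a\cdot\Phi(x)$ for all $a\in G,x\in X$), its set-theoretic inverse $\Phi^{-1}$ is automatically an equivalence of $G$-sets as well. Indeed, applying $\Phi^{-1}$ to $\Phi(a\cdot x)=a\cdot\Phi(x)$ and then substituting $y=\Phi(x)$ yields $\Phi^{-1}(a\cdot y)=a\cdot\Phi^{-1}(y)$.

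First I would check that $\tau$ is linear: this is immediate from the pointwise definitions of addition and scalar multiplication on $L(X)$ and $L(Y)$, since for any $f,g\in L(X)$, $\alpha\in\F$, and $y\in Y$, one computes $\tau(\alpha f+g)(y)=(\alpha f+g)(\Phi^{-1}(y))=\alpha f(\Phi^{-1}(y))+g(\Phi^{-1}(y))=(\alpha\tau(f)+\tau(g))(y)$. Next I would produce an explicit inverse: define $\tau'\colon L(Y)\to L(X)$ by $\tau'(g)=g\circ\Phi$ and verify $\tau'\circ\tau=\mathrm{id}_{L(X)}$ and $\tau\circ\tau'=\mathrm{id}_{L(Y)}$ by using $\Phi\circ\Phi^{-1}=\id{Y}$ and $\Phi^{-1}\circ\Phi=\id{X}$. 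This establishes that $\tau$ is a linear isomorphism from $L(X)$ onto $L(Y)$.

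Next I would prove that for any subgroup $H\leq G$, the map $\tau$ carries $\LG{H}{X}$ into $\LG{H}{Y}$. Fix $f\in\LG{H}{X}$, $a\in H$, and $y\in Y$. Using the $G$-equivariance of $\Phi^{-1}$ (which in particular makes it $H$-equivariant) and the $H$-invariance of $f$, I would compute
\begin{equation*}
\tau(f)(a\cdot y)=f(\Phi^{-1}(a\cdot y))=f(a\cdot\Phi^{-1}(y))=f(\Phi^{-1}(y))=\tau(f)(y),
\end{equation*}
so $\tau(f)\in\LG{H}{Y}$. An identical argument applied to the inverse $\tau'$ shows $\tau'(\LG{H}{Y})\subseteq\LG{H}{X}$, and consequently $\tau$ restricts to a linear isomorphism between $\LG{H}{X}$ and $\LG{H}{Y}$. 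The final sentence of the proposition follows at once by applying these facts to any equivalence witnessing that $X$ and $Y$ are equivalent $G$-sets.

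I do not expect any genuine obstacle here; everything reduces to the observation that the inverse of a $G$-equivariant bijection is itself $G$-equivariant, together with routine unravelling of definitions. The only point worth stating carefully is that the same $\Phi$ (and the same $\tau$) works simultaneously for every subgroup $H$, since $H$-equivariance is inherited from $G$-equivariance by restriction.
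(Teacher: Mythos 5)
Your proposal is correct and follows essentially the same route as the paper: linearity and bijectivity of $\tau$ are routine, and the two inclusions $\tau(\LG{H}{X})\subseteq\LG{H}{Y}$ and $g\circ\Phi\in\LG{H}{X}$ for $g\in\LG{H}{Y}$ are exactly the paper's two steps, with your $\tau'$ being the paper's pullback $f=g\circ\Phi$. The only (cosmetic) difference is that you first record that $\Phi^{-1}$ is itself $G$-equivariant and use it directly, whereas the paper reaches the same computation by writing $y=\Phi(x)$ via surjectivity of $\Phi$; both are valid and equally rigorous.
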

\begin{proof}
The proof that $\tau$ is a linear isomorphism is straightforward. Let $H$ be a subgroup of $G$ and let $f\in\LG{H}{X}$. We claim that $\tau(f)\in\LG{H}{Y}$. Let $a\in H$ and let $y\in Y$. By surjectivity, there is an element $x\in X$ such that $y=\Phi(x)$. Thus, $\tau(f)(a\cdot y)=\tau(f)(a\cdot\Phi(x))=\tau(f)(\Phi(a\cdot x))=f(\Phi^{-1}(\Phi(a\cdot x)))=f(a\cdot x)=f(x)=f(\Phi^{-1}(y))=(f\circ\Phi^{-1})(y)=\tau(f)(y)$. Hence, $\tau(f)\in\LG{H}{Y}$ and so $\tau$ maps $\LG{H}{X}$ to $\LG{H}{Y}$.

Let $g\in \LG{H}{Y}$ and set $f=g\circ\Phi$. Note that $f$ is a map from  $X$ to $\F$ and that $f(a\cdot x)=g(\Phi(a\cdot x))=g(a\cdot\Phi(x))=g(\Phi(x))=f(x)$ for all $a\in H$ and $x\in X$. Hence, $f\in \LG{H}{X}$. Furthermore, $\tau(f)=f\circ\Phi^{-1}=(g\circ\Phi)\circ\Phi^{-1}=g$. This proves that $\tau$ is surjective. Therefore, the restriction $\tau:\LG{H}{X}\to\LG{H}{Y}$ is a linear isomorphism.
\end{proof}

\newpage

The converse to Proposition \ref{prop: equivalent G-set imply isomorphic vector space} is not, in general, true. That is, the condition that \qt{$\LG{H}{X}\cong \LG{H}{Y}$ as vector spaces for some subgroup $H$ of $G$} does not imply that \qt{$X\cong Y$ as $G$-sets}. In fact, let $X$ be a set having at least two distinct elements, namely that $x, y\in X$ and $x\ne y$. Then $\sym{X}$ acts transitively on $\set{x}$ and on $\set{x, y}$ by evaluation. By Theorem \ref{thm: characterization of transitive action}, $\dim{(\LG{\sym{X}}{\set{x}})} = 1 = \dim{(\LG{\sym{X}}{\set{x, y}})}$ and so $\LG{\sym{X}}{\set{x}}\cong \LG{\sym{X}}{\set{x, y}}$. However, $\set{x}$ and $\set{x, y}$ are not equivalent.

\section{The case of finite actions}\label{sec: finite action}
If $G$ is a finite group and if $X$ is a finite $G$-set (that is, if the action is finite), we may use the Cauchy--Frobenius lemma (also called the Burnside lemma) to compute the dimension of $\LG{G}{X}$. Moreover, the space $L(X)$ (and hence also $\LG{G}{X}$) possesses a standard Hermitian inner product (the base field is assumed to be the field of complex numbers). This allows us to prove further related properties between group actions and their corresponding spaces, including Bessel's inequality and Frobenius reciprocity.

\subsection{Dimensions and fixed points}
With Theorem \ref{thm: basis of L^G(X)} in hand, we give a formula for computing the dimension of $\LG{G}{X}$, where $G$ and $X$ are finite, in terms of fixed points of $X$. As a consequence of this result, we obtain a few remarkable properties of free (also called fixed-point-free) actions.

\begin{lemma}\label{lem: dimension in terms of number of fixed point}
Let $G$ be a finite group and let $X$ be a finite $G$-set. For any subgroup $H$ of $G$,
\begin{equation}
    \dim{(\LG{H}{X})} = \dfrac{1}{\abs{H}}\lsum{a\in H}{}\abs{\Fix{a}},
\end{equation}
where $\Fix{a} = \cset{x\in X}{a\cdot x = x}$.
\end{lemma}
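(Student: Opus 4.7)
The plan is to combine two ingredients: Theorem \ref{thm: basis of L^G(X)}, which identifies $\dim(\LG{H}{X})$ with the number of $H$-orbits on $X$ when that number is finite, and the Cauchy--Frobenius (Burnside) lemma, which expresses the number of orbits in terms of fixed points of individual group elements.

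First I would observe that since $H$ is a subgroup of the finite group $G$, it too is finite, and the restriction of the $G$-action to $H$ makes $X$ into an $H$-set. Let $\cols{P}_H$ denote the partition of $X$ into $H$-orbits, i.e.\ the partition induced by the equivalence relation \eqref{def: sim relation} with $G$ replaced by $H$. Since $X$ is finite, so is $\cols{P}_H$, and Theorem \ref{thm: basis of L^G(X)} (applied to the $H$-action on $X$) yields
\begin{equation*}
\dim{(\LG{H}{X})} = \abs{\cols{P}_H}.
\end{equation*}

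Next I would invoke the Cauchy--Frobenius lemma for the finite $H$-set $X$, which asserts
\begin{equation*}
\abs{\cols{P}_H} = \dfrac{1}{\abs{H}}\lsum{a\in H}{}\abs{\Fix{a}},
\end{equation*}
where $\Fix{a} = \cset{x\in X}{a\cdot x = x}$. Chaining the two displayed equalities gives the claimed formula. If the paper wishes to be self-contained, one can sketch the standard double-counting proof of Burnside's lemma: count the set $\cset{(a,x)\in H\times X}{a\cdot x = x}$ in two ways, first by summing $\abs{\Fix{a}}$ over $a\in H$, and second by summing $\abs{\stab{x}}$ over $x\in X$; then use the orbit-stabilizer theorem $\abs{\orb{x}}\,\abs{\stab{x}} = \abs{H}$ and group the $x$'s by orbit, so that each orbit contributes exactly $\abs{H}$ to the total.

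There is no serious obstacle here, as both underlying facts are already available: Theorem \ref{thm: basis of L^G(X)} is proved in the excerpt, and Burnside's lemma is standard. The only care required is the mild bookkeeping point of passing from the $G$-action to its restriction to $H$, so that all quantities $\Fix{a}$, orbits, and the ambient vector space $\LG{H}{X}$ are interpreted with respect to the same $H$-action on $X$.
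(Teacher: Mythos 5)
Your proposal is correct and follows essentially the same route as the paper's own proof: restrict the action to $H$, apply Theorem \ref{thm: basis of L^G(X)} to identify $\dim(\LG{H}{X})$ with the number of $H$-orbits, and then invoke the Cauchy--Frobenius lemma. The extra sketch of the double-counting proof of Burnside's lemma is fine but unnecessary, since the paper likewise cites it as standard.
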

\begin{proof}
Recall that $H$ acts on $X$ by the action inherited from $G$ since $H$ is a subgroup of $G$. Let $\aorb{H}{x}=\set{a\cdot x:a\in H}$ and let $\cols{P}=\set{\aorb{H}{x}:x\in X}$. By Theorem \ref{thm: basis of L^G(X)}, $\dim(\LG{H}{X})$ equals $\abs{\cols{P}}$, the number of orbits of $H$ on $X$. By the famous Cauchy--Frobenius lemma, $\abs{\cols{P}}=\dfrac{1}{\abs{H}}\lsum{a\in H}{}\abs{\Fix{a}}$.
\end{proof}

\begin{lemma}\label{lem: diferrence of dimension}
Let $G$ be a finite group and let $X$ be a finite $G$-set. For any subgroup $H$ of $G$,
\begin{equation}
\abs{G}\dim{(\LG{G}{X})} - \abs{H}\dim{(\LG{H}{X})} = \lsum{a\in G\setminus H}{}\abs{\Fix{a}}.
\end{equation}
\end{lemma}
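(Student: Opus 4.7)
The proof is an immediate consequence of Lemma \ref{lem: dimension in terms of number of fixed point} applied twice, once to $G$ itself (viewed as a subgroup of $G$) and once to $H$. The plan is to multiply both sides of the formula in Lemma \ref{lem: dimension in terms of number of fixed point} by the order of the acting subgroup so as to clear denominators, obtaining
$$
\abs{G}\dim{(\LG{G}{X})} = \lsum{a\in G}{}\abs{\Fix{a}}\quad\text{and}\quad \abs{H}\dim{(\LG{H}{X})} = \lsum{a\in H}{}\abs{\Fix{a}}.
$$

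Subtracting the second identity from the first then yields
$$
\abs{G}\dim{(\LG{G}{X})} - \abs{H}\dim{(\LG{H}{X})} = \lsum{a\in G}{}\abs{\Fix{a}} - \lsum{a\in H}{}\abs{\Fix{a}}.
$$
Since $H\subseteq G$, the set $G$ partitions as $H\sqcup (G\setminus H)$, so the right-hand side collapses to $\sum_{a\in G\setminus H}\abs{\Fix{a}}$, giving the desired identity.

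There is essentially no obstacle: the entire lemma is a one-line algebraic consequence of the preceding result, and no further structural ingredients are needed. The only minor point worth mentioning is that Lemma \ref{lem: dimension in terms of number of fixed point} is stated for an arbitrary subgroup of $G$, so applying it with $H = G$ is legitimate and requires no separate justification.
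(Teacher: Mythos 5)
Your proposal is correct and follows essentially the same route as the paper: apply Lemma \ref{lem: dimension in terms of number of fixed point} to both $G$ and $H$, clear the denominators, and subtract using the partition $G = H\sqcup(G\setminus H)$. The paper's only additional remark is the (trivial) observation that $\Fix{a}$ for $a\in H$ is the same set whether computed under the $H$-action or the $G$-action, which your argument implicitly and legitimately uses.
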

\begin{proof}
Note that $\mathrm{Fix}_H\,a= \mathrm{Fix}_G\,a$ for all $a\in H$ because $H$ acts on $X$ by the action inherited from $G$. It follows from Lemma \ref{lem: dimension in terms of number of fixed point} that
\begin{equation*}\tag*{$\square$}
   \abs{G}\dim{(\LG{G}{X})} - \abs{H}\dim{(\LG{H}{X})} = \lsum{a\in G}{}\abs{\Fix{a}}-\lsum{a\in H}{}\abs{\Fix{a}}= \lsum{a\in G\setminus H}{}\abs{\Fix{a}}. 
\end{equation*}
\let\qed\relax
\end{proof}

Recall that an action of a group $G$ on a set $X$ is {\it free} if $\stab{x} = \set{e}$ for all $x\in X$; that is, if for all $a\in G, x\in X$, $a\cdot x = x$ implies $a = e$. It is clear that an action of $G$ on $X$ is free if and only if $\Fix{a} = \emptyset$ for all $a\in G\setminus\set{e}$. By Lemma \ref{lem: diferrence of dimension}, the ratio of $\dim{(\LG{H}{X})}$ and $\dim{(\LG{G}{X})}$ is simply the index of $H$ in $G$ when the action of $G$ on $X$ is free. It turns out that the family of free actions is rather limited in the sense of Corollary \ref{cor: the number of orbits in X with respect to G}.

\begin{theorem}\label{thm:fraction of dimension}
Let $G$ be a finite group with a subgroup $H$ and let $X$ be a finite nonempty set. If $G$ acts freely on $X$, then
\begin{equation}
    \dfrac{\dim{(\LG{H}{X})}}{\dim{(\LG{G}{X})}} = [G\colon H],
\end{equation}
where $[G\colon H]$ denotes the index of $H$ in $G$.
\end{theorem}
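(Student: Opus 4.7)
The plan is to derive this identity as an immediate consequence of Lemma \ref{lem: diferrence of dimension} once the freeness hypothesis is applied.

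First I would invoke Lemma \ref{lem: diferrence of dimension} to write
\begin{equation*}
\abs{G}\dim{(\LG{G}{X})} - \abs{H}\dim{(\LG{H}{X})} = \lsum{a\in G\setminus H}{}\abs{\Fix{a}}.
\end{equation*}
Next I would analyze the right-hand sum under the assumption that the action of $G$ on $X$ is free. By definition of a free action, $\Fix{a}=\emptyset$ whenever $a\ne e$. Since $e\in H$, every element $a\in G\setminus H$ is non-identity, so $\abs{\Fix{a}}=0$ for each such $a$. Consequently the sum vanishes and we obtain
\begin{equation*}
\abs{G}\dim{(\LG{G}{X})} = \abs{H}\dim{(\LG{H}{X})}.
\end{equation*}

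To conclude, I need to verify that $\dim{(\LG{G}{X})}\ne 0$ so that the ratio makes sense. This is where the hypothesis $X\ne\emptyset$ enters: by Theorem \ref{thm: basis of L^G(X)}, $\dim{(\LG{G}{X})}$ equals the number of $G$-orbits on $X$, and since $X$ is nonempty it has at least one orbit, so $\dim{(\LG{G}{X})}\ge 1$. Dividing both sides of the displayed equation by $\abs{H}\dim{(\LG{G}{X})}$ then yields
\begin{equation*}
\dfrac{\dim{(\LG{H}{X})}}{\dim{(\LG{G}{X})}} = \dfrac{\abs{G}}{\abs{H}} = [G\colon H].
\end{equation*}

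There is no serious obstacle here; the proof is essentially a one-line specialization of Lemma \ref{lem: diferrence of dimension}. The only subtlety worth stating explicitly is the nonvanishing of $\dim{(\LG{G}{X})}$, which follows from $X\ne\emptyset$ via Theorem \ref{thm: basis of L^G(X)}.
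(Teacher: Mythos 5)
Your proof is correct and follows essentially the same route as the paper's: specialize Lemma \ref{lem: diferrence of dimension}, use freeness (together with $e\in H$) to kill the sum $\lsum{a\in G\setminus H}{}\abs{\Fix{a}}$, and divide. The only difference is that you explicitly justify $\dim{(\LG{G}{X})}\geq 1$ via Theorem \ref{thm: basis of L^G(X)} and the nonemptiness of $X$ — a small point the paper leaves implicit, and a welcome bit of care.
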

\begin{proof}
Since $G$ acts freely on $X$, $\Fix{a}=\emptyset$ for all $a\in G\setminus H$. By Lemma \ref{lem: diferrence of dimension}, $$\abs{G}\dim{(\LG{G}{X})} - \abs{H}\dim{(\LG{H}{X})} = \lsum{a\in G\setminus H}{}\abs{\Fix{a}}=0.$$ Hence, $\dfrac{\dim{(\LG{H}{X})}}{\dim{(\LG{G}{X})}} = \dfrac{\abs{G}}{\abs{H}} = [G\colon H]$.
\end{proof}

\begin{corollary}\label{cor: the number of orbits in X with respect to G}
If a finite group $G$ acts freely on a finite nonempty set $X$, then 
\begin{equation}
\textrm{the number of orbits of $G$ on $X$} = \dim{(\LG{G}{X})} = \dfrac{\abs{X}}{\abs{G}}. 
\end{equation}
Therefore, if $\abs{G}$ does not divide $\abs{X}$, then $G$ does not act freely on $X$.
\end{corollary}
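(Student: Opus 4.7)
The plan is to derive this corollary as a direct consequence of Theorem \ref{thm:fraction of dimension} by specializing to the trivial subgroup, together with Theorem \ref{thm: basis of L^G(X)}.

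First I would take $H = \set{e}$ in Theorem \ref{thm:fraction of dimension}. Since the trivial group acts by the identity, every function $f\in L(X)$ satisfies $e\cdot x = x$ and so $f(e\cdot x) = f(x)$; hence $\LG{\set{e}}{X} = L(X)$. Because $X$ is finite, the partition of $X$ into $\set{e}$-orbits is just the partition into singletons, so by Theorem \ref{thm: basis of L^G(X)} we have $\dim(L(X)) = \dim(\LG{\set{e}}{X}) = \abs{X}$. Substituting this into the identity provided by Theorem \ref{thm:fraction of dimension} gives
\begin{equation*}
\frac{\abs{X}}{\dim(\LG{G}{X})} = \frac{\dim(\LG{\set{e}}{X})}{\dim(\LG{G}{X})} = [G\colon \set{e}] = \abs{G},
\end{equation*}
so $\dim(\LG{G}{X}) = \abs{X}/\abs{G}$.

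Next I would invoke Theorem \ref{thm: basis of L^G(X)} once more (this time applied to the $G$-action itself): since $X$ is finite, the set of $G$-orbits $\cols{P}$ is finite and $\dim(\LG{G}{X}) = \abs{\cols{P}}$, which is precisely the number of orbits of $G$ on $X$. Combining this with the computation above yields the chain of equalities in the corollary.

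For the final implication, I would argue by contrapositive. The number of orbits of $G$ on $X$ is a nonnegative integer; if $G$ acts freely on $X$, this integer equals $\abs{X}/\abs{G}$ by the first part, which forces $\abs{G}$ to divide $\abs{X}$. Hence if $\abs{G}\nmid\abs{X}$, then $G$ cannot act freely on $X$. I do not anticipate any real obstacle here: the work has essentially been done in Theorems \ref{thm: basis of L^G(X)} and \ref{thm:fraction of dimension}, and the only subtlety is the clean identification $\LG{\set{e}}{X} = L(X)$ together with the observation that $\dim(L(X)) = \abs{X}$ when $X$ is finite.
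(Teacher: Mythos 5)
Your proposal is correct and follows essentially the same route as the paper's proof: both specialize Theorem \ref{thm:fraction of dimension} to $H = \set{e}$, use $\LG{\set{e}}{X} = L(X)$ with $\dim(L(X)) = \abs{X}$, and conclude $\dim(\LG{G}{X}) = \abs{X}/\abs{G}$. Your additional explicit appeals to Theorem \ref{thm: basis of L^G(X)} for the orbit count and the contrapositive for the divisibility statement are details the paper leaves implicit in ``the corollary follows,'' but they do not constitute a different argument.
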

\begin{proof}
Note that $\LG{\set{e}}{X} = L(X)$. Since $X$ is finite, it follows that $\dim{(L(X))} = \abs{X}$. By Theorem \ref{thm:fraction of dimension}, $\dfrac{\abs{X}}{\dim{(\LG{G}{X})}}=\dfrac{\dim{(\LG{\set{e}}{X})}}{\dim{(\LG{G}{X})}} = [G\colon \set{e}]=\abs{G}$ and the corollary follows.
\end{proof}

Corollary \ref{cor: the number of orbits in X with respect to G} provides a numerical condition that can be used to verify that a given action is not free. Hence, in this case, a nontrivial stabilizer subgroup exists and some fixed-point set is nonempty. For instance, by Corollary \ref{cor: the number of orbits in X with respect to G}, we know that the following actions are not free without having to find explicit stabilizer subgroups (or fixed-point sets).
\begin{enumerate}
    \item The action of $\sym{X}$ on $X$ in the case when $X$ is finite and $\abs{X}>2$.
    \item Let $G$ be a finite group and let $H$ be a nontrivial subgroup of $G$. Then $G$ acts on the set of left cosets, $G/H = \cset{xH}{x\in G}$, by left multiplication: $a\cdot (xH) = (ax)H$ for all $a, x\in G$.
    \item Let $G$ be a finite group and let $Y$ be a subset of $G$ with nontrivial normalizer (that is, the normalizer subgroup $N_G(Y)$ does not equal $\set{e}$). For example, we may let $Y$ be a nontrivial subgroup of $G$. Then $G$ acts on the set $X = \cset{xYx^{-1}}{x\in G}$ by conjugation: $a\cdot (xYx^{-1}) = (ax)Y(ax)^{-1}$ for all $a, x\in G$.
    \item Let $G$ be a finite group whose order is divisible by a prime $p$. Then $G$ acts on the set of Sylow $p$-subgroups of $G$ by conjugation. 
    \item Let $G$ be a finite group whose order is divisible by a prime $p$. Then $G$ acts on the set $X= \cset{x\in G}{\abs{x} = p}$ by conjugation. 
    \item Let $\F$ be a finite field and let $G$ be the general linear group $\Gl{n, \F}$ with $n\geq 2$. Then $G$ acts on the set
    $$
    X = \Cset{\begin{bmatrix}\alpha_1\\ \alpha_2\\ \vdots \\ \alpha_n\end{bmatrix}}{\alpha_i\in\F\textrm{ for all }i = 1,2,\ldots, n}
    $$
    by (left) matrix multiplication: $A\cdot [\alpha_1~~\alpha_2~~\cdots~~\alpha_n]^t = A[\alpha_1~~\alpha_2~~\cdots~~\alpha_n]^t$ for all $A\in \Gl{n, \F}$, $[\alpha_1~~\alpha_2~~\cdots~~\alpha_n]^t\in X$.
   \item Let $G$ be a finite group that is not a $2$-group and let $X$ be a $G$-set. Then $G$ acts on the collection of subsets of $X$ by letting $a\cdot Y = \cset{a\cdot y}{y\in Y}$ for all $a\in G$ and for all subsets $Y$ of $X$. 
    \item Let $G$ be a finite group. Then $G\times G$ acts on $G$ by
$(a, b)\cdot x = axb^{-1}$ for all $(a, b)\in G\times G, x\in G$ (cf. Section 1.2.8 of \cite{MR1716962}).
\end{enumerate}

We close this section with another application of Lemma \ref{lem: dimension in terms of number of fixed point}.
\begin{proposition}
Let $G$ be a finite group and let $A$ be a group of automorphisms of $G$, that is, $A\leq \aut{G}$. Then
\begin{equation}
    \lsum{\tau\in A}{}\abs{\cset{x\in G}{\tau(x) = x}} \leq \abs{A}\abs{G}.
\end{equation}
\end{proposition}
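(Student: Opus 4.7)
The plan is to realize the group $A$ as acting on $G$ via evaluation, apply Lemma \ref{lem: dimension in terms of number of fixed point} to express the sum in terms of the dimension of the associated fixed subspace, and then use the trivial bound that this subspace sits inside $L(G)$, whose dimension is $|G|$ since $G$ is finite.

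In more detail, I would first observe that since $A \leq \aut{G} \leq \sym{G}$, the group $A$ acts on $G$ by evaluation $\tau\cdot x = \tau(x)$, and under this action we have $\Fix{\tau} = \cset{x\in G}{\tau(x) = x}$ for every $\tau\in A$. Since $G$ is finite and $A$ is a subgroup of the finite group $\aut{G}$, both $A$ and $G$ are finite, so Lemma \ref{lem: dimension in terms of number of fixed point} applies (taking $X = G$ and $H = A$ sitting inside a sufficiently large ambient group, or simply applying the Cauchy--Frobenius lemma directly to the action of $A$ on $G$). This yields
\begin{equation*}
\dim{(\LG{A}{G})} \;=\; \dfrac{1}{\abs{A}}\lsum{\tau\in A}{}\abs{\Fix{\tau}}.
\end{equation*}

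Next, because $\LG{A}{G}$ is a subspace of $L(G)$ and $\dim{(L(G))} = \abs{G}$ (as $G$ is finite), we obtain $\dim{(\LG{A}{G})} \leq \abs{G}$. Multiplying the displayed equality above by $\abs{A}$ gives
\begin{equation*}
\lsum{\tau\in A}{}\abs{\cset{x\in G}{\tau(x) = x}} \;=\; \abs{A}\,\dim{(\LG{A}{G})} \;\leq\; \abs{A}\abs{G},
\end{equation*}
which is exactly the claimed inequality.

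There is essentially no obstacle here; the only minor point worth checking is that the hypothesis of Lemma \ref{lem: dimension in terms of number of fixed point} is met, namely that $A$ genuinely acts on the finite set $G$. This is immediate from $A \leq \sym{G}$. I would also note in passing that equality holds precisely when $\LG{A}{G} = L(G)$, which by Theorem \ref{thm: characterization of L(X) = LG(X)} occurs if and only if $A$ acts trivially on $G$, i.e., $A = \set{\id{G}}$.
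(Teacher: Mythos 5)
Your proof is correct and follows essentially the same route as the paper's: letting $A \leq \aut{G}$ act on $G$ by evaluation, applying Lemma \ref{lem: dimension in terms of number of fixed point} to get $\dim{(\LG{A}{G})} = \frac{1}{\abs{A}}\sum_{\tau\in A}\abs{\Fix{\tau}}$, and bounding $\dim{(\LG{A}{G})} \leq \dim{(L(G))} = \abs{G}$. Your closing remark on the equality case, via Theorem \ref{thm: characterization of L(X) = LG(X)}, is a correct observation that the paper does not make, but the core argument is identical.
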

\begin{proof}
Recall that $\aut{G}$ acts on $G$ by $\tau\cdot a=\tau(a)$ for all $\tau\in \aut{G}, a\in G$. Since $A$ is a subgroup of $\aut{G}$, by Lemma \ref{lem: dimension in terms of number of fixed point}, $\dim{(\LG{A}{G})} = \dfrac{1}{\abs{A}}\lsum{\tau\in A}{}\abs{\cset{x\in G}{\tau(x) = x}}$. Since $\LG{A}{G}\subseteq L(G)$, it follows that $\dim(\LG{A}{G}) \leq \dim{(L(G))} =\abs{G}$. Hence, the inequality follows.
\end{proof}

\subsection{Orthogonal decomposition and Frobenius reciprocity}
In this section, let $G$ be a (finite or infinite) group and let $X$ be a finite $G$-set unless otherwise stated. We also suppose that $\F = \C$. Thus,
\begin{equation}
   L(X)=\cset{f}{f \textrm{ is a function from $X$ to $\C$}}
\end{equation}
and $L(X)$ admits the complex inner product structure. In fact, for all $f, g\in L(X)$, define
\begin{equation}\label{eqn: inner product on L(X)}
    \gen{f, g} = \dfrac{1}{\abs{X}}\lsum{x\in X}{}f(x)\lbar{g(x)},
\end{equation}
where $\bar{\cdot}$ denotes complex conjugation. Then the following result is obtained.

\begin{proposition}\label{prop: inner product on LG(X) and its orthonormal basis}
Equation \eqref{eqn: inner product on L(X)} defines a Hermitian inner product on $L(X)$. Hence, $L(X)$ forms a complex inner product space. If $\cols{P} = \cset{\orb{x}}{x\in X}$, then $$\cols{B} = \Cset{\sqrt{\dfrac{\abs{X}}{\abs{C}}}\delta_C}{C\in \cols{P}}$$ forms an orthonormal basis for $\LG{G}{X}$.
\end{proposition}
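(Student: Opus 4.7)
The plan is to verify the three claims in sequence, each of which reduces to a direct computation. First, for the Hermitian inner product structure, I would check the three defining axioms of a Hermitian inner product on a complex vector space: (i) conjugate symmetry, $\gen{f, g} = \lbar{\gen{g, f}}$, which follows from the identity $\lbar{\alpha\beta} = \lbar{\alpha}\,\lbar{\beta}$ applied termwise; (ii) linearity in the first slot, $\gen{\alpha f_1 + \beta f_2, g} = \alpha\gen{f_1, g} + \beta\gen{f_2, g}$, which is immediate from the distributive laws in $\C$; and (iii) positive definiteness, $\gen{f, f} = \frac{1}{\abs{X}}\sum_{x\in X}\abs{f(x)}^2 \geq 0$, with equality iff $f(x) = 0$ for every $x\in X$, i.e.\ $f$ is the zero function. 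Finiteness of $X$ is used throughout to ensure the sum is well-defined and finite. The second claim of the proposition is then just the statement that $L(X)$ becomes a complex inner product space, which is immediate from (i)--(iii).

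For the third claim, I would invoke Theorem \ref{thm: basis of L^G(X)}: since $X$ is finite, so is $\cols{P}$, and the family $\cset{\delta_C}{C\in\cols{P}}$ is a basis for $\LG{G}{X}$. Because $\cols{B}$ is obtained by rescaling each basis vector by the nonzero scalar $\sqrt{\abs{X}/\abs{C}}$, it remains a basis for $\LG{G}{X}$. Hence only orthonormality needs verification.

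For orthonormality, let $C, C'\in\cols{P}$. If $C\neq C'$, then $C\cap C' = \emptyset$ (since $\cols{P}$ is a partition), so $\delta_C(x)\delta_{C'}(x) = 0$ for every $x\in X$, giving
$$
\gen{\sqrt{\tfrac{\abs{X}}{\abs{C}}}\,\delta_C,\ \sqrt{\tfrac{\abs{X}}{\abs{C'}}}\,\delta_{C'}} = \sqrt{\tfrac{\abs{X}}{\abs{C}}}\sqrt{\tfrac{\abs{X}}{\abs{C'}}}\cdot\dfrac{1}{\abs{X}}\lsum{x\in X}{}\delta_C(x)\lbar{\delta_{C'}(x)} = 0.
$$
If $C = C'$, then $\delta_C(x)\lbar{\delta_C(x)} = \delta_C(x)$ since $\delta_C$ is $\set{0,1}$-valued, so $\sum_{x\in X}\delta_C(x)\lbar{\delta_C(x)} = \abs{C}$, and
$$
\gen{\sqrt{\tfrac{\abs{X}}{\abs{C}}}\,\delta_C,\ \sqrt{\tfrac{\abs{X}}{\abs{C}}}\,\delta_C} = \dfrac{\abs{X}}{\abs{C}}\cdot \dfrac{\abs{C}}{\abs{X}} = 1.
$$

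There is no real obstacle here; the entire proposition is a routine verification, and the only point that requires any care is choosing the normalization $\sqrt{\abs{X}/\abs{C}}$ so that the factor $1/\abs{X}$ from the inner product cancels the orbit size $\abs{C}$.
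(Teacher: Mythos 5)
Your proof is correct and follows essentially the same route as the paper: the inner-product axioms by direct computation, Theorem \ref{thm: basis of L^G(X)} for the basis claim, and the same disjointness/counting computation for orthonormality. The only difference is that you are slightly more careful than the paper in noting explicitly that rescaling the $\delta_C$ by nonzero scalars preserves the basis property, which is a welcome touch of rigor but not a different argument.
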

\begin{proof}
The proof that \eqref{eqn: inner product on L(X)} defines a Hermitian inner product on $L(X)$ is direct computation. By Theorem \ref{thm: basis of L^G(X)}, $\cols{B}$ forms a basis for $\LG{G}{X}$. Next, we prove that $\cols{B}$ is orthonormal. Let $\sqrt{\dfrac{\abs{X}}{\abs{C}}}\delta_{C},\sqrt{\dfrac{\abs{X}}{\abs{D}}}\delta_{D}\in \cols{B}$ with $C, D\in\cols{P}$. If $C\neq D$, then
$$
 \gen{\sqrt{\dfrac{\abs{X}}{\abs{C}}}\delta_{C},\sqrt{\dfrac{\abs{X}}{\abs{D}}}\delta_{D}} = \dfrac{1}{\sqrt{\abs{C}\abs{D}}}\lsum{x\in X}{}\delta_{C}(x)\lbar{\delta_{D}(x)} = 0
$$
because for each $x\in X$, either $x\in C$ or $x\in D$. If $C=D$, then
$$
\gen{\sqrt{\dfrac{\abs{X}}{\abs{C}}}\delta_{C},\sqrt{\dfrac{\abs{X}}{\abs{D}}}\delta_{D}} = \dfrac{1}{\sqrt{\abs{C}\abs{D}}}\lsum{x\in X}{}\delta_{C}(x)\lbar{\delta_{D}(x)} = \dfrac{1}{\abs{C}}\lsum{x\in X}{}\abs{\delta_{C}(x)}^2 = 1
$$
because $\lsum{x\in X}{}\abs{\delta_{C}(x)}^2 = \abs{C}$. 
\end{proof}

One advantage of the inner product defined by \eqref{eqn: inner product on L(X)} is shown in the following theorem, demonstrating that the action of $G$ on $X$ is preserved by this inner product.

\begin{proposition}
The action given by \eqref{equ: linear action on G-set} is unitary in the sense that
$$\gen{a\cdot f, a\cdot g} = \gen{f, g}$$
for all $f, g\in L(X)$, $a\in G$. In particular, the map $f\mapsto a\cdot f$, $f\in L(X)$, is a unitary operator on $L(X)$ for all $a\in G$.
\end{proposition}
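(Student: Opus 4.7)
The plan is a direct computation using the explicit formulas for the action and the inner product, followed by a change of variable that exploits the fact that the map $x \mapsto a^{-1} \cdot x$ is a permutation of $X$.

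First I would unfold both sides using the definitions. By \eqref{equ: linear action on G-set}, $(a \star f)(x) = f(a^{-1} \cdot x)$ and similarly for $g$, so
\begin{equation*}
\gen{a \star f, a \star g} = \dfrac{1}{\abs{X}} \lsum{x \in X}{} f(a^{-1} \cdot x) \lbar{g(a^{-1} \cdot x)}.
\end{equation*}
Next I would perform the substitution $y = a^{-1} \cdot x$. Since the action of $G$ on $X$ is by bijections, the assignment $x \mapsto a^{-1} \cdot x$ is a bijection of $X$ onto itself, and the sum is reindexed without change in value, yielding $\frac{1}{\abs{X}} \sum_{y \in X} f(y) \lbar{g(y)} = \gen{f, g}$.

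For the second claim, the map $U_a \colon f \mapsto a \star f$ is easily seen to be linear: $(a \star (f + \alpha h))(x) = (f + \alpha h)(a^{-1} \cdot x) = f(a^{-1} \cdot x) + \alpha h(a^{-1} \cdot x) = (a \star f)(x) + \alpha (a \star h)(x)$. Since $L(X)$ is finite-dimensional (as $X$ is finite, $\dim L(X) = \abs{X}$) and $U_a$ preserves the Hermitian inner product by the first part, $U_a$ is automatically invertible with $U_a^{-1} = U_{a^{-1}}$ (from the fact that $\star$ is a group action), and a linear isometry of a finite-dimensional complex inner product space onto itself is by definition a unitary operator.

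The only potentially subtle point is the reindexing step, but it rests on nothing more than the elementary observation that each element of $G$ acts as a bijection on $X$; no orbit-level analysis or invocation of earlier results is needed. I would not expect any serious obstacle here — the result is essentially the standard fact that a permutation action lifts to a unitary representation on the associated function space, and the proof is a two-line calculation.
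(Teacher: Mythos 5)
Your proposal is correct and follows essentially the same route as the paper's proof: unfold both sides via \eqref{equ: linear action on G-set} and \eqref{eqn: inner product on L(X)}, then reindex the sum using the bijection $x\mapsto a^{-1}\cdot x$. Your additional verification of the \qt{in particular} clause (linearity of $f\mapsto a\cdot f$ and invertibility via $U_{a^{-1}}$), which the paper leaves implicit, is a harmless and correct supplement.
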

\begin{proof}
Let $f,g\in L(X)$ and let $a\in G$. Then
\begin{align*}
    \gen{a\cdot f, a\cdot g} &= \dfrac{1}{\abs{X}}\lsum{x\in X}{}(a\cdot f)(x)\lbar{(a\cdot g)(x)}\\
    {} &= \dfrac{1}{\abs{X}}\lsum{x\in X}{}f(a^{-1}\cdot x)\lbar{g(a^{-1}\cdot x)}\\
    {} &= \dfrac{1}{\abs{X}}\lsum{x\in X}{}f(x)\lbar{g(x)}\\
    {} &= \gen{f, g}.
\end{align*}
The third equality holds since the map $x\mapsto a^{-1}\cdot x$ is a bijection from $X$ to itself.
\end{proof}

To obtain an orthogonal decomposition of $L(X)$, we define a map $\sigma$ by
\begin{equation}\label{eqn: linear functional on L(X)}
    \sigma(f) = \lsum{x\in X}{}f(x),\qquad f\in L(X).
\end{equation}

\newpage

\begin{theorem}\label{thm: property of kernel sigma and Lgyr(G)}
Let $\sigma$ be the map defined by \eqref{eqn: linear functional on L(X)}. Then the following assertions hold:
\begin{enumerate}
\item\label{item: sigma linear functional}  $\sigma$ is a linear functional from $L(X)$ to $\C$.
\item\label{item: invariant subspace of sigma} $\ker{\sigma}$ is an invariant subspace of $L(X)$ under the action given by \eqref{equ: linear action on G-set}.
\item\label{item: ker sigma as perp} $\ker{\sigma} = (\spn{f_1})^\perp$, where $f_1(x) = 1$ for all $x\in X$.
\item\label{item: dimension of ker sigma} $\dim{(\ker{\sigma})} = \abs{X}-1$.
\item\label{item: invariant subspace of LG(X)} $\ker{\Bp{\res{\sigma}{\LG{G}{X}}}}$ is an invariant subspace of $\LG{G}{X}$ and its dimension equals the number of orbits on $X$ minus $1$. Here, $\res{\sigma}{\LG{G}{X}}$ is the restriction of $\sigma$ to $\LG{G}{X}$.
\item\label{item: LG(X) perp} $\LG{G}{X}^\perp \subseteq \ker{\sigma}$; equality holds if and only if the action of $G$ on $X$ is transitive.
\end{enumerate}
\end{theorem}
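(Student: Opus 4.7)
Assertion (1) follows immediately from the linearity of finite summation. For (2), the plan is to observe that $x\mapsto a^{-1}\cdot x$ is a bijection of $X$, giving
\[
\sigma(a\star f) = \lsum{x\in X}{}f(a^{-1}\cdot x) = \lsum{y\in X}{}f(y) = \sigma(f),
\]
so that $\ker{\sigma}$ is stable under every $a\star(-)$.

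For (3) and (4), I would pair an arbitrary $f\in L(X)$ with the constant function $f_1$: direct computation from \eqref{eqn: inner product on L(X)} gives $\gen{f, f_1} = \frac{1}{\abs{X}}\sigma(f)$, so $\ker{\sigma} = \set{f_1}^{\perp} = (\spn{f_1})^{\perp}$, yielding (3). Since $\sigma(f_1) = \abs{X}\ne 0$, the functional $\sigma$ is surjective onto $\C$, and rank-nullity then delivers $\dim{(\ker{\sigma})} = \abs{X} - 1$, yielding (4).

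For (5), invariance is automatic: $G$ acts trivially on $\LG{G}{X}$ by definition, so every subspace of $\LG{G}{X}$ is $G$-stable. The dimension count parallels (4)---since $f_1\in \LG{G}{X}$, the restriction $\res{\sigma}{\LG{G}{X}}$ is still surjective onto $\C$, and rank-nullity together with Theorem \ref{thm: basis of L^G(X)} gives $\dim{\Bp{\ker{\Bp{\res{\sigma}{\LG{G}{X}}}}}} = \dim{(\LG{G}{X})} - 1 = \abs{\cols{P}} - 1$, the number of orbits minus one.

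For (6), $f_1\in \LG{G}{X}$ forces $\spn{f_1}\subseteq \LG{G}{X}$; reversing inclusions under orthogonal complement gives $\LG{G}{X}^{\perp}\subseteq (\spn{f_1})^{\perp} = \ker{\sigma}$ by (3). To settle the equivalence I will use that $V^{\perp\perp} = V$ in the finite-dimensional inner product space $L(X)$: the equality $\LG{G}{X}^{\perp} = \ker{\sigma}$ then collapses, upon taking perpendiculars, to $\LG{G}{X} = \spn{f_1}$, which by Theorem \ref{thm: characterization of transitive action} is exactly the condition that $G$ acts transitively on $X$. The only genuinely delicate step is this double-perpendicular maneuver in (6); all the rest is either direct computation or an invocation of Theorem \ref{thm: basis of L^G(X)} or Theorem \ref{thm: characterization of transitive action}.
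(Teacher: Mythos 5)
Your proposal is correct and follows essentially the same route as the paper: you identify $\ker{\sigma}=(\spn{f_1})^{\perp}$ by pairing with the constant function $f_1$, use the bijection $x\mapsto a^{-1}\cdot x$ for invariance, and settle the equivalence in (6) by reducing $\LG{G}{X}^{\perp}=\ker{\sigma}$ to $\LG{G}{X}=\spn{f_1}$ via finite-dimensionality and Theorem \ref{thm: characterization of transitive action}, exactly as the paper does. The only immaterial difference is that you count dimensions in (4) and (5) by rank--nullity (using $\sigma(f_1)=\abs{X}\neq 0$), where the paper instead invokes the projection theorem to write $L(X)=\spn{f_1}\oplus\ker{\sigma}$.
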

\begin{proof}
The proofs of Parts \eqref{item: sigma linear functional}, \eqref{item: ker sigma as perp}, and \eqref{item: invariant subspace of LG(X)} are immediate. Part \eqref{item: invariant subspace of sigma} holds because the map $x\mapsto a^{-1}\cdot x$ is a bijection from $X$ to itself. To prove Part \eqref{item: dimension of ker sigma}, note that $\spn{f_1}$ is a finite-dimensional subspace of $L(X)$. By the projection theorem in linear algebra and Part \eqref{item: ker sigma as perp}, $L(X) = \spn{f_1}\oplus \ker{\sigma}$ and so $\dim(\ker{\sigma})=\abs{X}-1$. That $\LG{G}{X}^\perp \subseteq \ker{\sigma}$ is clear. By Theorem \ref{thm: characterization of transitive action} and Part \eqref{item: ker sigma as perp}, $\LG{G}{X}^\perp = \ker{\sigma}$ if and only if $\LG{G}{X} = \spn f_1$ (since $\LG{G}{X}$ and $\spn{f_1}$ are finite-dimensional subspaces of $L(X)$) if and only if the action of $G$ on $X$ is transitive. This proves Part \eqref{item: LG(X) perp}.
\end{proof}

\begin{corollary}\label{cor: orthogonal direct sum of L(X) and LG(X)}
Let $G$ be a group and let $X$ be a finite $G$-set. Then
\begin{enumerate}
    \item\label{Projection of L(X) to LG(X)} $L(X) = \LG{G}{X}\operp \LG{G}{X}^\perp$;
    \item\label{Projection of L(X) to span f_1} $L(X) = \spn{f_1}\operp \ker{\sigma}$;
    \item\label{Projection of LG(X)} $\LG{G}{X} = \spn{f_1}\operp \ker{\Bp{\res{\sigma}{\LG{G}{X}}}}$.
\end{enumerate}
Here, $\operp$ denotes orthogonal direct sum decomposition.
\end{corollary}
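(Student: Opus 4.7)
The plan is to treat all three parts as consequences of the finite-dimensional projection theorem, together with the facts already collected in Theorem \ref{thm: property of kernel sigma and Lgyr(G)}. Since $X$ is finite, $L(X)$ is a finite-dimensional complex inner product space, so every subspace $W\subseteq L(X)$ satisfies $L(X)=W\operp W^\perp$. This immediately handles Part \eqref{Projection of L(X) to LG(X)} by taking $W=\LG{G}{X}$.

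For Part \eqref{Projection of L(X) to span f_1}, the same projection theorem gives $L(X)=\spn{f_1}\operp (\spn{f_1})^\perp$, and then Theorem \ref{thm: property of kernel sigma and Lgyr(G)}\eqref{item: ker sigma as perp} identifies $(\spn{f_1})^\perp$ with $\ker\sigma$, yielding the desired decomposition.

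For Part \eqref{Projection of LG(X)}, I would work inside the finite-dimensional inner product space $\LG{G}{X}$ (inheriting the inner product from $L(X)$). Since $f_1$ is constant, $f_1\in\LG{G}{X}$, so $\spn{f_1}$ is a subspace of $\LG{G}{X}$ and the projection theorem (applied inside $\LG{G}{X}$) gives
\[
\LG{G}{X}=\spn{f_1}\operp\bigl(\spn{f_1}\bigr)^{\perp_{\LG{G}{X}}},
\]
where the orthogonal complement is taken inside $\LG{G}{X}$. The remaining step is the identification
\[
\bigl(\spn{f_1}\bigr)^{\perp_{\LG{G}{X}}}=(\spn{f_1})^\perp\cap \LG{G}{X}=\ker{\sigma}\cap\LG{G}{X}=\ker\!\bigl(\res{\sigma}{\LG{G}{X}}\bigr),
\]
using Theorem \ref{thm: property of kernel sigma and Lgyr(G)}\eqref{item: ker sigma as perp} in the middle equality and the definition of the restriction in the last.

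None of the steps presents a genuine obstacle; the only small subtlety is being careful that the orthogonal complement in Part \eqref{Projection of LG(X)} is computed relative to $\LG{G}{X}$ rather than to $L(X)$, and then cleanly identifying this relative complement with $\ker\!\bigl(\res{\sigma}{\LG{G}{X}}\bigr)$ via the characterization $\ker\sigma=(\spn{f_1})^\perp$ established earlier.
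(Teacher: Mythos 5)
Your proof is correct and takes essentially the same route as the paper's: each part is the finite-dimensional projection theorem combined with the identification $\ker{\sigma}=(\spn{f_1})^\perp$ from Theorem \ref{thm: property of kernel sigma and Lgyr(G)}, with Part \eqref{Projection of LG(X)} resting on the inclusion $\spn{f_1}\subseteq\LG{G}{X}$. If anything, you are more explicit than the paper, which disposes of Part \eqref{Projection of LG(X)} with the single remark that $\spn{f_1}$ is a subspace of $\LG{G}{X}$, whereas you carefully verify the relative-complement identification $\bigl(\spn{f_1}\bigr)^{\perp_{\LG{G}{X}}}=\ker{\bigl(\res{\sigma}{\LG{G}{X}}\bigr)}$.
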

\begin{proof}
Part \eqref{Projection of L(X) to LG(X)} follows from the projection theorem. Part \eqref{Projection of L(X) to span f_1} follows as in the proof of Theorem \ref{thm: property of kernel sigma and Lgyr(G)} \eqref{item: dimension of ker sigma}. Part \eqref{Projection of LG(X)} holds since $\spn{f_1}$ is a subspace of $\LG{G}{X}$.
\end{proof}

According to Proposition \ref{prop: inner product on LG(X) and its orthonormal basis}, we have an orthonormal basis for $\LG{G}{X}$, which is an orthonormal set in $L(X)$. Thus, several prominent results in linear algebra can be deduced from this fact.

\begin{theorem}
Let $G$ be a group and let $X$ be a finite $G$-set. Suppose that $$\cols{P} = \cset{\orb{x}}{x\in X} = \set{C_1, C_2,\ldots, C_n}.$$ Fix the ordered (orthonormal) basis of $\LG{G}{X}$: $\cols{B} = \left(\sqrt{\dfrac{\abs{X}}{\abs{C_1}}}\delta_{C_1}, \sqrt{\dfrac{\abs{X}}{\abs{C_2}}}\delta_{C_2},\ldots, \sqrt{\dfrac{\abs{X}}{\abs{C_n}}}\delta_{C_n}\right)$. Then the following assertions hold:
\begin{enumerate}
    \item\label{item: Fourier expansion} {\bf (Fourier expansion)} The Fourier expansion with respect to $\cols{B}$ of
a function \mbox{$f\in L(X)$} is
\begin{equation}
    \lhat{f} = \Bp{ \dfrac{1}{\abs{C_1}}\lsum{x\in C_1}{}f(x)}\delta_{C_1} + \Bp{ \dfrac{1}{\abs{C_2}}\lsum{x\in C_2}{}f(x)}\delta_{C_2} + \cdots + \Bp{ \dfrac{1}{\abs{C_n}}\lsum{x\in C_n}{}f(x)}\delta_{C_n};
\end{equation}
that is, the Fourier coefficients of $f$ are given by
\begin{equation}
    \gen{f, \sqrt{\dfrac{\abs{X}}{\abs{C_i}}}\delta_{C_i}} = \dfrac{1}{\sqrt{\abs{X}\abs{C_i}}}\lsum{x\in C_i}{}f(x)
\end{equation}
for all $i = 1,2,\ldots, n$.
\item\label{item: Bessel's inequality} {\bf (Bessel's inequality)} For all $f\in L(X)$,
\begin{equation}\label{ineq: Bessel's inequality}
    \lsum{i=1}{n}\dfrac{1}{\abs{C_i}}\Abs{\lsum{x\in C_i}{}f(x)}^2 \leq \lsum{x\in X}{}\abs{f(x)}^2.
\end{equation}
\item\label{item: Parseval identity not true} If $G$ acts nontrivially on $X$, then there exists a function $f\in L(X)$ with $\norm{\lhat{f}} < \norm{f}$. That is, the equality in Bessel's identity is not attained.
\end{enumerate}
\end{theorem}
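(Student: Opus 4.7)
All three parts reduce to standard inner-product-space facts applied to the orthonormal basis $\cols{B}$ of $\LG{G}{X}$ furnished by Proposition \ref{prop: inner product on LG(X) and its orthonormal basis}. The plan is to use only two abstract ingredients: for a finite orthonormal set $e_1,\ldots,e_n$ in a complex inner product space, (a) $\lhat{f} = \lsum{i=1}{n}\gen{f, e_i}\,e_i$ is the orthogonal projection of $f$ onto $\spn{\set{e_1,\ldots,e_n}}$, and (b) $\norm{f}^2 = \lsum{i=1}{n}\abs{\gen{f, e_i}}^2 + \norm{f - \lhat{f}}^2$.

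For Part \eqref{item: Fourier expansion}, I would compute $\gen{f, e_i}$ directly for $e_i = \sqrt{\abs{X}/\abs{C_i}}\,\delta_{C_i}$. Because $\delta_{C_i}$ is supported on $C_i$, the sum in \eqref{eqn: inner product on L(X)} collapses to a sum over $C_i$ and produces the stated coefficient formula $\gen{f, e_i} = \frac{1}{\sqrt{\abs{X}\abs{C_i}}}\lsum{x\in C_i}{}f(x)$. Multiplying by $e_i$ pulls out the scalar $\frac{1}{\abs{C_i}}\lsum{x\in C_i}{}f(x)$ in front of $\delta_{C_i}$, giving the advertised expansion of $\lhat{f}$ — which one may read as \emph{orbit averaging} of $f$.

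Part \eqref{item: Bessel's inequality} is then immediate: the abstract Bessel inequality $\lsum{i=1}{n}\abs{\gen{f, e_i}}^2 \le \norm{f}^2$ becomes, after substituting the coefficients from Part \eqref{item: Fourier expansion},
$$
\lsum{i=1}{n}\dfrac{1}{\abs{X}\abs{C_i}}\Abs{\lsum{x\in C_i}{}f(x)}^2 \le \dfrac{1}{\abs{X}}\lsum{x\in X}{}\abs{f(x)}^2,
$$
and clearing the common factor $1/\abs{X}$ yields \eqref{ineq: Bessel's inequality}.

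For Part \eqref{item: Parseval identity not true}, ingredient (b) shows that $\norm{\lhat{f}} = \norm{f}$ if and only if $f = \lhat{f}$, i.e.\ if and only if $f\in\LG{G}{X}$. Thus it suffices to exhibit a single $f\in L(X)\setminus \LG{G}{X}$. Since $G$ acts nontrivially on $X$, Theorem \ref{thm: characterization of L(X) = LG(X)} guarantees some orbit $C_{i_0}\in \cols{P}$ with $\abs{C_{i_0}}\ge 2$; picking $x_0\in C_{i_0}$ and letting $f$ be the indicator of the singleton $\set{x_0}$, one has $f(x_0)=1$ but $f$ vanishes elsewhere on $C_{i_0}$, so $f$ is not constant on $C_{i_0}$ and hence $f\notin \LG{G}{X}$ by Theorem \ref{thm: f in L^G(x) iff f is constant}. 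A short direct check (which I would include as confirmation) gives $\norm{f}^2 = 1/\abs{X}$ and $\norm{\lhat{f}}^2 = 1/(\abs{X}\abs{C_{i_0}}) < \norm{f}^2$. I do not anticipate any genuine obstacle here; the whole argument is essentially the observation that the orbit-averaging map $f\mapsto \lhat{f}$ is an isometry on $\LG{G}{X}$ but a strict contraction off of it.
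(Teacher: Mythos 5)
Your proposal is correct, and Parts \eqref{item: Fourier expansion} and \eqref{item: Bessel's inequality} coincide with the paper's own proof: compute $\gen{f, \sqrt{\abs{X}/\abs{C_i}}\,\delta_{C_i}}$ directly from \eqref{eqn: inner product on L(X)} (the sum collapses to $C_i$ because $\delta_{C_i}$ is supported there), assemble $\lhat{f}$, and clear the common factor $1/\abs{X}$ in the abstract inequality $\norm{\lhat{f}}\leq\norm{f}$.

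The genuine divergence is in Part \eqref{item: Parseval identity not true}. The paper argues nonconstructively: since the action is nontrivial, $\LG{G}{X}\subsetneq L(X)$ by Theorem \ref{thm: characterization of L(X) = LG(X)}, hence $\cols{B}$ is not an orthonormal basis of $L(X)$, and the existence of some $f$ with $\norm{\lhat{f}}<\norm{f}$ is imported wholesale from a cited textbook result (Theorem 9.17 of \cite{SR2008ALA}). You instead prove the needed dichotomy yourself via the Pythagorean identity $\norm{f}^2=\lsum{i=1}{n}\abs{\gen{f,e_i}}^2+\norm{f-\lhat{f}}^2$, so that $\norm{\lhat{f}}=\norm{f}$ if and only if $f=\lhat{f}$, i.e.\ $f\in\LG{G}{X}$, and then exhibit an explicit witness: the indicator of a point $x_0$ in an orbit $C_{i_0}$ with $\abs{C_{i_0}}\geq 2$ (such an orbit exists by the same Theorem \ref{thm: characterization of L(X) = LG(X)}) fails to be constant on $C_{i_0}$ and so lies outside $\LG{G}{X}$ by Theorem \ref{thm: f in L^G(x) iff f is constant}. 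Your numerical check is right: here $\lhat{f}=\frac{1}{\abs{C_{i_0}}}\delta_{C_{i_0}}$, so $\norm{\lhat{f}}^2=\frac{1}{\abs{X}\abs{C_{i_0}}}<\frac{1}{\abs{X}}=\norm{f}^2$. What each approach buys: the paper's route is shorter but leans on an external reference; yours is self-contained and constructive, producing a concrete $f$, quantifying the defect $\norm{f}^2-\norm{\lhat{f}}^2$, and making transparent the underlying mechanism --- that $f\mapsto\lhat{f}$ is orbit averaging, an isometry precisely on $\LG{G}{X}$ and a strict contraction off it.
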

\begin{proof} Recall that the Fourier coefficients of $f$ are $$\gen{f, \sqrt{\dfrac{\abs{X}}{\abs{C_i}}}\delta_{C_i}}  = \dfrac{1}{\abs{X}}\lsum{x\in C_i}{}f(x)\sqrt{\dfrac{\abs{X}}{\abs{C_i}}} = \dfrac{1}{\sqrt{\abs{X}\abs{C_i}}}\lsum{x\in C_i}{}f(x)$$ for all $i = 1,2,\ldots, n$. Hence, the Fourier expansion of $f$ with respect to $\cols{B}$ is
\begin{align*}
    \lhat{f} &= \gen{f, \sqrt{\dfrac{\abs{X}}{\abs{C_1}}}\delta_{C_1}}\sqrt{\dfrac{\abs{X}}{\abs{C_1}}}\delta_{C_1}+\cdots+\gen{f, \sqrt{\dfrac{\abs{X}}{\abs{C_n}}}\delta_{C_n}}\sqrt{\dfrac{\abs{X}}{\abs{C_n}}}\delta_{C_n}\\
    & = \Bp{\dfrac{1}{\sqrt{\abs{X}\abs{C_1}}}\lsum{x\in C_1}{}f(x)}\sqrt{\dfrac{\abs{X}}{\abs{C_1}}}\delta_{C_1}+\cdots+\Bp{ \dfrac{1}{\sqrt{\abs{X}\abs{C_n}}}\lsum{x\in C_n}{}f(x)}\sqrt{\dfrac{\abs{X}}{\abs{C_n}}}\delta_{C_n}\\
    &=\Bp{ \dfrac{1}{\abs{C_1}}\lsum{x\in C_1}{}f(x)}\delta_{C_1} + \cdots + \Bp{ \dfrac{1}{\abs{C_n}}\lsum{x\in C_n}{}f(x)}\delta_{C_n}.
\end{align*}
This proves Part \eqref{item: Fourier expansion}.

Recall that Bessel's inequality states that $\norm{\lhat{f}} \leq \norm{f}$. Hence,
\begin{align*}
    \lsum{i=1}{n}\Abs{\gen{f, \sqrt{\dfrac{\abs{X}}{\abs{C_i}}}\delta_{C_i}}}^2 \leq \gen{f,f}.
\end{align*}
Direct computation shows that $\lsum{i=1}{n}\Abs{\gen{f, \sqrt{\dfrac{\abs{X}}{\abs{C_i}}}\delta_{C_i}}}^2 = \dfrac{1}{\abs{X}}\lsum{i=1}{n}\dfrac{1}{\abs{C_i}}\Abs{\lsum{x\in C_i}{}f(x)}^2$ and that $\gen{f,f} = \dfrac{1}{\abs{X}}\lsum{x\in X}{}\abs{f(x)}^2$. Hence, \eqref{ineq: Bessel's inequality} follows. This proves Part \eqref{item: Bessel's inequality}.

Suppose that $G$ acts nontrivially on $X$. By Theorem \ref{thm: characterization of L(X) = LG(X)}, $\LG{G}{X} \subsetneq L(X)$. This implies that  $\cols{B}$ is not an orthonormal basis for $L(X)$ and so there exists a function $f$ in $L(X)$ with $\norm{\lhat{f}} < \norm{f}$ by Theorem 9.17 of \cite{SR2008ALA}. This proves Part \eqref{item: Parseval identity not true}.\end{proof}

Next, we extend Frobenius reciprocity from the space of class functions to that of functions invariant under a given group action in a natural way. Let $G$ be a group and let $X$ be a $G$-set. Recall that a (nonempty) subset $Y$ of $X$ is {\it invariant} if $a\cdot y\in Y$ for all $a\in G, y\in Y$; that is, if $G\cdot Y = Y$. It is not difficult to check that the following are equivalent:
\begin{enumerate}
\item\label{item: Y invariant subset} $Y$ is an invariant subset of $X$;
\item\label{item: characterization of invariant subset} for all $a\in G, x\in X$, $a\cdot x\in Y$ if and only if $x\in Y$.
\end{enumerate}

Let $X$ be a $G$-set and let $Y$ be an invariant subset of $X$. Define a map $\Res{X}{Y}{}$ from $L(X)$ to $L(Y)$ by
\begin{equation}
\Res{X}{Y}{f}(y) = f(y),\qquad y\in Y
\end{equation}
for all $f\in L(X)$. Also, for each $f\in L(Y)$, define $\tilde{f}$ by
\begin{equation}
\tilde{f}(x) = 
\begin{cases}
f(x) &\textrm{if }x\in Y;\\
0 &\textrm{otherwise}
\end{cases}
\end{equation}
for all $x\in X$. Then $\tilde{f}\in L(X)$. In fact, we have the following lemma.

\begin{lemma}\label{lem: epsilon is linear map from LG(Y) to LG(X)}
The map $\epsilon\colon L(Y)\to L(X)$ given by $\epsilon(f) = \tilde{f}$ is linear and maps $\LG{G}{Y}$ to $\LG{G}{X}$.
\end{lemma}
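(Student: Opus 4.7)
The plan is to verify the two assertions in turn. For linearity, I would fix $f, g \in L(Y)$ and a scalar $\alpha$, and show that $\widetilde{f+g} = \tilde f + \tilde g$ and $\widetilde{\alpha f} = \alpha\tilde f$ by a straightforward case split on whether the argument lies in $Y$: on $Y$ the extension agrees with the original function and the identities are immediate, while off $Y$ both sides vanish. This is a routine check with no subtlety.

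The substantive part is showing that $\epsilon$ sends $L^G(Y)$ into $L^G(X)$. Fix $f \in L^G(Y)$ and put $\tilde f = \epsilon(f)$. I would verify the defining condition $\tilde f(a \cdot x) = \tilde f(x)$ for arbitrary $a \in G$ and $x \in X$ by splitting into two cases according to whether $x \in Y$ or $x \notin Y$. In the first case, invariance of $Y$ gives $a\cdot x \in Y$, so both $\tilde f(a\cdot x) = f(a\cdot x)$ and $\tilde f(x) = f(x)$, and these are equal because $f \in L^G(Y)$. In the second case, I would invoke the characterization \eqref{item: characterization of invariant subset} of invariant subsets, which gives $a \cdot x \notin Y$ since $x \notin Y$; hence $\tilde f(a \cdot x) = 0 = \tilde f(x)$ directly from the definition of $\tilde f$.

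There is no real obstacle here; the only point requiring care is remembering to use the \emph{equivalent} formulation \eqref{item: characterization of invariant subset} of invariance rather than only the forward implication $a \cdot y \in Y$ for $y \in Y$. Without it, one would only know that the orbit of a point in $Y$ stays in $Y$, and could not immediately conclude that the orbit of a point outside $Y$ stays outside $Y$, which is exactly what is needed to handle the zero case. Once that is in place, the proof fits in a few lines.
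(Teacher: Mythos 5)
Your proof is correct and takes essentially the same approach as the paper, whose own proof simply says the invariance claim holds ``by the remark above''---that remark being exactly the biconditional characterization of invariant subsets ($a\cdot x\in Y$ if and only if $x\in Y$) that you invoke, so your case split on $x\in Y$ versus $x\notin Y$ is just the verification the paper leaves implicit. You are also right to flag that the biconditional, not merely the forward implication $G\cdot Y\subseteq Y$, is what handles the zero case.
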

\begin{proof}
The proof that $\epsilon$ is linear is straightforward. By the remark above, $\epsilon(f)\in \LG{G}{X}$ for all $f\in\LG{G}{Y}$. 
\end{proof}

\begin{theorem}
Let $X$ be a $G$-set with an invariant subset $Y$. Then $\Res{X}{Y}{}\colon L(X)\to L(Y)$ is linear and maps $\LG{G}{X}$ surjectively onto $\LG{G}{Y}$.
\end{theorem}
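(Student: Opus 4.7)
The plan is to verify each claim in turn: linearity of $\Res{X}{Y}{}$, the fact that $\Res{X}{Y}{}$ sends $\LG{G}{X}$ into $\LG{G}{Y}$, and finally surjectivity of the restricted map. The first two are essentially formal, and surjectivity will follow immediately from the extension map $\epsilon$ constructed in Lemma \ref{lem: epsilon is linear map from LG(Y) to LG(X)}.

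First, I would check linearity of $\Res{X}{Y}{}$ by unwinding the definition pointwise: for $f, g \in L(X)$, $\alpha \in \F$, and $y \in Y$, one has $\Res{X}{Y}{(\alpha f + g)}(y) = (\alpha f + g)(y) = \alpha f(y) + g(y) = \alpha\Res{X}{Y}{f}(y) + \Res{X}{Y}{g}(y)$. Next I would show $\Res{X}{Y}{}$ maps $\LG{G}{X}$ into $\LG{G}{Y}$. Fix $f \in \LG{G}{X}$ and let $a \in G$, $y \in Y$. Since $Y$ is invariant, $a \cdot y \in Y$, so $\Res{X}{Y}{f}$ is well-defined at $a \cdot y$, and $\Res{X}{Y}{f}(a \cdot y) = f(a \cdot y) = f(y) = \Res{X}{Y}{f}(y)$, which shows $\Res{X}{Y}{f} \in \LG{G}{Y}$.

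For surjectivity, given $g \in \LG{G}{Y}$, I would take $\tilde{g} = \epsilon(g)$ as defined before Lemma \ref{lem: epsilon is linear map from LG(Y) to LG(X)}. That lemma already guarantees $\tilde{g} \in \LG{G}{X}$, and by construction $\tilde{g}(y) = g(y)$ for every $y \in Y$, so $\Res{X}{Y}{\tilde{g}} = g$. Hence the restriction $\Res{X}{Y}{}\colon \LG{G}{X} \to \LG{G}{Y}$ is surjective.

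There is no real obstacle here; the only subtlety worth flagging is that the invariance hypothesis on $Y$ is used in two places: to ensure $a \cdot y$ lies in $Y$ (so that the restriction can even be evaluated there) when verifying $\Res{X}{Y}{f} \in \LG{G}{Y}$, and implicitly when invoking Lemma \ref{lem: epsilon is linear map from LG(Y) to LG(X)}, whose proof in turn rests on the characterization \eqref{item: characterization of invariant subset} of invariant subsets noted just before the lemma.
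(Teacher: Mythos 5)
Your proof is correct and follows essentially the same route as the paper's: linearity is verified pointwise, and surjectivity comes from the extension map $\epsilon$ of Lemma \ref{lem: epsilon is linear map from LG(Y) to LG(X)}, taking $\tilde{g}=\epsilon(g)$ so that $\Res{X}{Y}{\tilde{g}}=g$. You are in fact slightly more thorough than the paper, which leaves implicit the verification that $\Res{X}{Y}{f}\in\LG{G}{Y}$ for $f\in\LG{G}{X}$ --- a step where, as you rightly note, the invariance of $Y$ is what makes the computation $\Res{X}{Y}{f}(a\cdot y)=f(a\cdot y)=f(y)$ legitimate.
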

\begin{proof}
The proof that $\Res{X}{Y}{}$ is linear is straightforward. Let $f\in \LG{G}{Y}$. By Lemma \ref{lem: epsilon is linear map from LG(Y) to LG(X)}, $\tilde{f}\in \LG{G}{X}$ and $\Res{X}{Y}{\tilde{f}(y)} = \tilde{f}(y)  = f(y)$ for all $y\in Y$. So $\Res{X}{Y}{\tilde{f}}=f$. This proves that $\Res{X}{Y}{}$ is surjective.
\end{proof}

Let $G$ be a finite group, let $X$ be a finite $G$-set, and let $Y$ be an invariant subset of $X$. Define a map $\Ind{X}{Y}{}$ on $L(Y)$ by
\begin{equation}\label{eqn: Induction map}
\Ind{X}{Y}{f}(x) = \dfrac{\abs{X}}{\abs{G}\abs{Y}}\lsum{b\in G}{}\tilde{f}(b^{-1}\cdot x),\qquad x\in X
\end{equation}
for all $f\in L(Y)$. Then $\Ind{X}{Y}{}$ is a linear transformation from $L(Y)$ to $\LG{G}{X}$, as shown in the following theorem. 

\begin{theorem}
The map $\Ind{X}{Y}{}$ defined by \eqref{eqn: Induction map} is a linear transformation from $L(Y)$ to $\LG{G}{X}$. 
\end{theorem}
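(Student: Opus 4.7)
The plan is to verify two things: (i) $\Ind{X}{Y}{}$ is linear, and (ii) for every $f\in L(Y)$ the function $\Ind{X}{Y}{f}$ actually lies in the fixed subspace $\LG{G}{X}$.

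Linearity is a direct calculation. Since the extension map $f\mapsto \tilde f$ of Lemma \ref{lem: epsilon is linear map from LG(Y) to LG(X)} is linear, and since the finite sum $\sum_{b\in G}$ together with the multiplicative constant $\abs{X}/(\abs{G}\abs{Y})$ is linear, the composite $\Ind{X}{Y}{}$ is linear as well. I would just write out $\Ind{X}{Y}{(\alpha f+\beta g)}(x)$, pull the scalars out of $\widetilde{\alpha f+\beta g}=\alpha\tilde f+\beta\tilde g$, and split the sum.

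The substantive step is invariance. Fix $a\in G$ and $x\in X$. I would compute
\begin{align*}
\Ind{X}{Y}{f}(a\cdot x) &= \dfrac{\abs{X}}{\abs{G}\abs{Y}}\lsum{b\in G}{}\tilde{f}(b^{-1}\cdot(a\cdot x))\\
&= \dfrac{\abs{X}}{\abs{G}\abs{Y}}\lsum{b\in G}{}\tilde{f}((b^{-1}a)\cdot x),
\end{align*}
and then make the change of variable $c=a^{-1}b$, so that $b^{-1}a=c^{-1}$ and as $b$ ranges over $G$ the element $c$ also ranges over $G$ (the map $b\mapsto a^{-1}b$ is a bijection of $G$). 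This turns the sum into $\sum_{c\in G}\tilde f(c^{-1}\cdot x)$, which is exactly $\Ind{X}{Y}{f}(x)$ up to the same constant. Hence $\Ind{X}{Y}{f}\in \LG{G}{X}$.

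There is essentially no obstacle here: the main conceptual input is that the averaging over $G$ symmetrizes the function, which is the standard mechanism behind induced representations, and the reason it is well-defined for arbitrary $f\in L(Y)$ (not just $f\in\LG{G}{Y}$) is that we first extend by zero to $\tilde f\in L(X)$ via the construction preceding Lemma \ref{lem: epsilon is linear map from LG(Y) to LG(X)}. Once the bijection $b\mapsto a^{-1}b$ is invoked, both required properties follow immediately.
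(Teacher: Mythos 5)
Your proposal is correct and follows essentially the same route as the paper: the paper likewise dismisses linearity as straightforward and establishes invariance by computing $\Ind{X}{Y}{f}(a\cdot x)$, rewriting $b^{-1}\cdot(a\cdot x)$ as $(b^{-1}a)\cdot x$, and invoking the change of variable $c=a^{-1}b$, justified by the bijectivity of $b\mapsto a^{-1}b$ on $G$. Your additional remark that extension by zero via $\tilde f$ is what makes the definition work for arbitrary $f\in L(Y)$ is a nice observation, but the argument itself is the paper's verbatim.
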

\begin{proof}
The proof that $\Ind{X}{Y}{}$ is linear is straightforward. Let $f\in L(Y)$. Given $a\in G$ and $x\in X$, we have by inspection that 
\begin{align*}
    \Ind{X}{Y}{f(a\cdot x)} &= \dfrac{\abs{X}}{\abs{G}\abs{Y}}\lsum{b\in G}{}\tilde{f}(b^{-1}\cdot (a\cdot x))\\
    {} &= \dfrac{\abs{X}}{\abs{G}\abs{Y}}\lsum{b\in G}{}\tilde{f}((b^{-1}a)\cdot x)\\
    {} &= \dfrac{\abs{X}}{\abs{G}\abs{Y}}\lsum{c\in G}{}\tilde{f}(c^{-1}\cdot x)\\
    {} & = \Ind{X}{Y}{f(x)}.
\end{align*}
The third equality holds since if $b$ runs over all of $G$, then so does $a^{-1}b$ (that is, the change of variable $c = a^{-1}b$ is permitted). Thus, $\Ind{X}{Y}{f}\in\LG{G}{X}$. 
\end{proof}

The following theorem asserts that the linear transformations $\Res{X}{Y}{}$ and $\Ind{X}{Y}{}$ are Hermitian adjoint with respect to the Hermitian inner product defined earlier. This is a group-action version of Frobenius reciprocity.

\begin{theorem}[Frobenius reciprocity]
Let $G$ be a finite group, let $X$ be a finite $G$-set, and let $Y$ be an invariant subset of $X$. Then
\begin{equation}\label{eqn: Frobenius reciprocity}
    \gen{\Ind{X}{Y}{f}, g} = \gen{f, \Res{X}{Y}{g}}
\end{equation}
for all $f\in\LG{G}{Y}, g\in\LG{G}{X}$.
\end{theorem}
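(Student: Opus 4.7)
The plan is to expand the left-hand side directly from the definition of $\Ind{X}{Y}{}$ and of the inner product on $L(X)$, then exploit the $G$-invariance of $g$ to perform a change of variables that collapses the sum over $G$. Specifically, starting from
$$\gen{\Ind{X}{Y}{f}, g} = \dfrac{1}{\abs{X}}\lsum{x\in X}{}\Ind{X}{Y}{f}(x)\lbar{g(x)} = \dfrac{1}{\abs{G}\abs{Y}}\lsum{x\in X}{}\lsum{b\in G}{}\tilde{f}(b^{-1}\cdot x)\lbar{g(x)},$$
I would use the fact that $g\in \LG{G}{X}$ to rewrite $g(x) = g(b^{-1}\cdot x)$ for every $b\in G$, so that the summand becomes $\tilde{f}(b^{-1}\cdot x)\lbar{g(b^{-1}\cdot x)}$.

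Next I would interchange the order of summation and, for each fixed $b\in G$, apply the change of variable $y = b^{-1}\cdot x$. Since the map $x\mapsto b^{-1}\cdot x$ is a bijection of $X$ to itself, the inner sum becomes $\sum_{y\in X}\tilde{f}(y)\lbar{g(y)}$, which no longer depends on $b$. The outer sum over $G$ then contributes a factor of $\abs{G}$, which cancels with the $\abs{G}$ in the denominator.

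Finally I would use the definition of $\tilde{f}$: since $\tilde{f}(y) = 0$ for $y\notin Y$ and $\tilde{f}(y) = f(y)$ for $y\in Y$, the sum over $X$ reduces to a sum over $Y$, yielding
$$\gen{\Ind{X}{Y}{f}, g} = \dfrac{1}{\abs{Y}}\lsum{y\in Y}{}f(y)\lbar{g(y)} = \dfrac{1}{\abs{Y}}\lsum{y\in Y}{}f(y)\lbar{\Res{X}{Y}{g}(y)} = \gen{f, \Res{X}{Y}{g}},$$
where the final inner product is the one on $L(Y)$ (defined analogously to \eqref{eqn: inner product on L(X)} with $X$ replaced by $Y$).

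There is no real obstacle here; the argument is a bookkeeping exercise. The only subtle point is the role played by the $G$-invariance of $g$: without it, the substitution $g(x)\mapsto g(b^{-1}\cdot x)$ would be illegal, and the change of variables could not be performed uniformly in $b$. The normalizing constant $\abs{X}/(\abs{G}\abs{Y})$ in the definition of $\Ind{X}{Y}{}$ is precisely the one that makes the remaining factors cancel to leave the inner product on $L(Y)$ with weight $1/\abs{Y}$, which is why this specific normalization was chosen in \eqref{eqn: Induction map}.
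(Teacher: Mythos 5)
Your proof is correct, but it is not the paper's argument: the two computations put the weight on different hypotheses. You use the invariance of $g$ (rewriting $\lbar{g(x)}$ as $\lbar{g(b^{-1}\cdot x)}$, interchanging the sums, and substituting $y = b^{-1}\cdot x$ over all of $X$ so that the sum over $G$ factors out as $\abs{G}$), and you only invoke the support of $\tilde{f}$ at the very end; you never use $f\in\LG{G}{Y}$, so your argument in fact proves the identity for every $f\in L(Y)$, not just invariant $f$. The paper does the opposite: it never uses the invariance of $g$. It first collapses the outer sum from $x\in X$ to $x\in Y$, which requires the invariance of $Y$ (for $x\notin Y$ one has $b^{-1}\cdot x\notin Y$, hence $\tilde{f}(b^{-1}\cdot x)=0$, while for $x\in Y$ one has $\tilde{f}(b^{-1}\cdot x)=f(b^{-1}\cdot x)$), and then uses $f\in\LG{G}{Y}$ to replace $\lsum{b\in G}{}f(b^{-1}\cdot x)$ by $\abs{G}f(x)$; so the paper's computation proves the identity for every $g\in L(X)$, not just invariant $g$. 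Both routes are equally elementary bookkeeping and both yield the theorem as stated; each is slightly stronger than the theorem in a different direction ($\gen{\Ind{X}{Y}{f},g}=\gen{f,\Res{X}{Y}{g}}$ holds whenever \emph{either} factor is invariant, with your version additionally dispensing with the invariance of $Y$ itself). One small caveat about your closing remark: the failure of the substitution without $g$'s invariance shows only that \emph{your} route breaks down, not that the identity fails --- indeed the paper's route shows it survives for non-invariant $g$ provided $f$ is invariant.
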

\begin{proof}
Direct computation shows that
\begin{align*}
\gen{\Ind{X}{Y}{f}, g} & = \dfrac{1}{\abs{X}}\lsum{x\in X}{}\Ind{X}{Y}{f(x)}\lbar{g(x)}\\
& = \dfrac{1}{\abs{X}}\lsum{x\in X}{}\left( \dfrac{\abs{X}}{\abs{G}\abs{Y}}\lsum{b\in G}{}\tilde{f}(b^{-1}\cdot x)\right)\lbar{g(x)}\\
& = \dfrac{1}{\abs{G}\abs{Y}}\lsum{x\in X}{}\left( \lsum{b\in G}{}\tilde{f}(b^{-1}\cdot x)\right)\lbar{g(x)} \\
& = \dfrac{1}{\abs{G}\abs{Y}}\lsum{x\in Y}{}\left( \lsum{b\in G}{}f(b^{-1}\cdot x)\right)\lbar{g(x)} \\
& = \dfrac{1}{\abs{G}\abs{Y}}\lsum{x\in Y}{} \abs{G}f(x)\lbar{g(x)} \\
& = \dfrac{1}{\abs{Y}}\lsum{x\in Y}{} f(x)\lbar{g(x)} \\
& = \dfrac{1}{\abs{Y}}\lsum{x\in Y}{}f(x)\lbar{\Res{X}{Y}{g(x)}}\\
& = \gen{f, \Res{X}{Y}{g}}.
\end{align*}
The fifth equality holds since $f\in \LG{G}{Y}$, which implies that $f(b^{-1}\cdot x)=f(x)$ for all $b\in G, x\in Y$.
\end{proof}

We remark that the inner product used on the right hand side of \eqref{eqn: Frobenius reciprocity} is computed by the same formula as in \eqref{eqn: inner product on L(X)} with $Y$ in place of $X$. This makes sense because if $Y$ is an invariant subset of $X$, then the $G$-action on $X$ restricts to the $G$-action on $Y$. In other words, the restriction of the Hermitian inner product of $L(X)$ to $L(Y)$ does define an inner product on $L(Y)$.

\hspace{0.3cm}

\par\noindent\textbf{Acknowledgment.} This work was supported by the Research Center in Mathematics and Applied Mathematics, Chiang Mai University. %The authors are grateful to the referee for constructive comments, which help improve the manuscript. 

\bibliographystyle{amsplain}\addcontentsline{toc}{section}{References}
\bibliography{References}

\providecommand{\bysame}{\leavevmode\hbox to3em{\hrulefill}\thinspace}
\providecommand{\MR}{\relax\ifhmode\unskip\space\fi MR }
% \MRhref is called by the amsart/book/proc definition of \MR.
\providecommand{\MRhref}[2]{%
  \href{http://www.ams.org/mathscinet-getitem?mr=#1}{#2}
}
\providecommand{\href}[2]{#2}
\begin{thebibliography}{10}

\bibitem{MR1027834}
T.~M. Apostol, \emph{Modular functions and {D}irichlet series in number
  theory}, 2nd ed., Graduate Texts in Mathematics, vol.~41, Springer-Verlag,
  New York, 1990.

\bibitem{MR3592484}
R.~J. Archbold and E.~Kaniuth, \emph{Topological {F}robenius reciprocity for
  representations of nilpotent groups and motion groups}, J. Lie Theory
  \textbf{27} (2017), no.~3, 745--769.

\bibitem{MR2004511}
I.~Dolgachev, \emph{Lectures on invariant theory}, London Mathematical Society
  Lecture Note Series, vol. 296, Cambridge University Press, Cambridge, 2003.

\bibitem{MR3977723}
K.~Hristova, \emph{Frobenius reciprocity for topological groups}, Comm. Algebra
  \textbf{47} (2019), no.~5, 2102--2117.

\bibitem{MR1716962}
A.~Kerber, \emph{Applied finite group actions}, 2nd ed., Algorithms and
  Combinatorics, vol.~19, Springer-Verlag, Berlin, 1999.

\bibitem{MR1304906}
D.~Mumford, J.~Fogarty, and F.~Kirwan, \emph{Geometric invariant theory}, 3rd
  ed., Results in Mathematics and Related Areas (2), vol.~34, Springer-Verlag,
  Berlin, 1994.

\bibitem{MR1869812}
M.~D. Neusel and L.~Smith, \emph{Invariant theory of finite groups},
  Mathematical Surveys and Monographs, vol.~94, American Mathematical Society,
  Providence, RI, 2002.

\bibitem{SR2008ALA}
S.~Roman, \emph{Advanced linear algebra}, 3rd ed., Graduate Texts in
  \mbox{Mathematics}, vol. 135, Springer-Verlag, New York, 2008.

\bibitem{BS2012RTF}
B.~Steinberg, \emph{Representation theory of finite groups}, Universitext,
  Springer, New York, 2012.

\bibitem{TS2018MTGP}
T.~Suksumran, \emph{Extension of {M}aschke's theorem}, Comm. Algebra
  \textbf{47} (2019), no.~5, 2192--2203.

\bibitem{TS2020CRG}
\bysame, \emph{Complete reducibility of gyrogroup representations}, Comm.
  Algebra \textbf{48} (2020), no.~2, 847--856.

\bibitem{TSLRG2020}
\bysame, \emph{Left regular representation of gyrogroups}, Mathematics
  \textbf{8} (2020), no.~1, Article 12, 9 pages.

\bibitem{AU2008AHG}
A.~A. Ungar, \emph{Analytic hyperbolic geometry and {A}lbert {E}instein's
  {S}pecial \mbox{{T}heory} of {R}elativity}, World Scientific, Hackensack, NJ,
  2008.

\end{thebibliography}
\end{document}